\documentclass[journal,romanappendices]{IEEEtran}

\usepackage{amsmath, amsfonts, amssymb, mathtools}
\usepackage{algorithm}
\usepackage[noend]{algpseudocode}
\usepackage{graphicx, epsfig}
\usepackage{subcaption}
\usepackage{cite}
\usepackage[inline]{enumitem}

\usepackage{tikz}
\usetikzlibrary{positioning, arrows.meta}
\usetikzlibrary{arrows,calc,fit}

\tikzset{block/.style={
            draw=black,
            thick,
            align=center,
            rounded corners=7pt,
            minimum width=7cm,
            minimum height=0.9cm
        }}
\tikzset{line/.style={
            draw, 
            thick, 
            -Stealth
        }}

\newtheorem{theorem}{Theorem}

\newtheorem{lemma}{Lemma}

\newtheorem{assumption}{Assumption}

\newtheorem{remark}{Remark}

\DeclareMathOperator*{\argmin}{arg\,min}

\DeclareMathOperator*{\st}{subject\;to\;}
\DeclareMathOperator*{\sst}{s.t.\;}

\DeclareMathOperator{\gra}{gra}

\DeclarePairedDelimiter{\product} {\langle} {\rangle}
\DeclarePairedDelimiter{\norm} {\lVert} {\rVert}
\DeclarePairedDelimiter{\positive} {[} {]_+}

\renewcommand{\IEEEQED}{\IEEEQEDopen}
\newcommand{\proj}{\mathcal{P}}

\title{Distributed Optimization with Coupled Constraints over Time-Varying Digraph}
\author{Yeong-Ung Kim, and Hyo-Sung Ahn
    \thanks{This work was supported by the National Research Foundation of Korea (NRF) grant funded by the Korea government (MSIT) (2022R1A2B5B03001459)
    }
    \thanks{The authors are
        with the Department of Mechanical and Robotics Engineering, 
        Gwangju Institute of Science and Technology(GIST), Gwangju, 61005, Republic of Korea
        (e-mails: {\tt\small yeongungkim@gm.gist.ac.kr, hyosung@gist.ac.kr}).
    }
}

\begin{document}
\maketitle

\begin{abstract}
    In this paper, we develop a distributed algorithm for solving a class of distributed convex optimization problems where the local objective functions can be a general nonsmooth function, and all equalities and inequalities are network-wide coupled.
    This type of problem arises from many areas, such as economic dispatch, network utility maximization, and demand response.
    Integrating the decomposition by right hand side allocation and primal-dual methods, the proposed algorithm is able to handle the distributed optimization over networks with time-varying directed graph in fully distributed fashion.
    This algorithm does not require the communication of sensitive information, such as primal variables, for privacy issues. 
    Further, we show that the proposed algorithm is guaranteed to achieve an $O(1/k)$ rate of convergence in terms of optimality based on duality analysis under the condition that local objective functions are strongly convex but not necessarily differentiable, and the subdifferential of local inequalities is bounded.
    We simulate the proposed algorithm to demonstrate its remarkable performance.
\end{abstract}
\begin{IEEEkeywords}
    Distributed optimization; constrained optimization; time-varying digraph;
\end{IEEEkeywords}

\section{Introduction}
Multiagent systems are being actively researched in fields such as autonomous vehicle swarms and collaborative robots.
Among the studies on multiagent systems, one of the most important researches is distributed optimization, in which a network of agents collaboratively seeks an optimal value to minimize a global objective function while relying only on local computation and communication with their interconnected neighbors~\cite{zheng2022review, yang2019survey}.
Distributed optimization is used for various applications such as economic dispatch in power systems~\cite{mao2020privacy, li2018accelerated}, multi-robot task allocation~\cite{zavlanos2008distributed}, and optimization based multiagent control~\cite{chen2020online, tan2021distributed}.
The study of distributed optimization mainly investigates two types of problems, consensus optimization~\cite{nedic2009distributed} and constraint coupled optimization~\cite{doostmohammadian2025survey}.

Consensus optimization is formulated as minimizing the sum of local objective functions subject to a global constraint that all agents must agree on a common decision variable.
The consensus constraint is typically addressed by interleaving local optimization steps with a distributed averaging and consensus process.
The communication of network is modeled as either an undirected or a directed graph whose edges represent available connections for transferring information between agents.
The algorithms with undirected graph, such as DIGing~\cite{nedic2017achieving}, EXTRA~\cite{shi2015extra} and P-EXTRA~\cite{shi2015proximal}, are studied.
Additionally, algorithms based on the concept of duality are also explored in \cite{xu2018bregman, maros2020geometrically, ghaderyan2023fast}.
Subsequent studies have focused on handling directed network topologies, where the methods \cite{tsianos2012push, nedic2014distributed} based on push-sum protocol~\cite{kempe2003gossip} are studied.
The authors in \cite{nedic2017achieving, pu2020push} combine gradient tracking with the push-sum protocol.
Some studies also consider a constraint which is either identical constraint known to all agents \cite{li2018distributed, xi2016distributed} or local constraints \cite{nedic2010constrained, margellos2017distributed, wu2019fenchel, shahriari2025double}.

Constraint coupled optimization is another type of distributed optimization problem.
The core challenge lies in the distributed nature of the network: while the objective function is separable, the coupling constraint links the decision variables of all agents. 
As it is more challenging and increasingly relevant problem in modern system, such as resource allocation problem and cooperative control system, it has garnered significant attention over the past decade \cite{lakshmanan2008decentralized, zhang2017distributed, falsone2017dual, nedic2018decentralized, xu2018dual, aybat2019distributed, notarnicola2019constraint, camisa2021distributed, wu2022distributed}.
To overcome the difficulty imposed by the non-separable coupling constraint, distributed methodologies rely on optimization duality and decomposition techniques.

By incorporating the coupling constraint into the Lagrangian, the complex problem can be mathematically decomposed into a sequence of simpler, uncoupled local subproblems that each agent can solve independently. 
In \cite{falsone2017dual}, distributed algorithm has been proposed based on a dual proximal optimization algorithm employing running averages for primal recovery.
The author in \cite{notarnicola2019constraint} introduces auxiliary neighboring variables to mimic a local version of the coupling constraint and a relaxation to allow positive violation for the subproblem minimizing the local objective subject to local uncoupled constraint.
In \cite{camisa2021distributed}, the primal decomposition method, called right-hand side allocation, is used in order to extend the analysis in \cite{notarnicola2019constraint} to time-varying undirected graph and reduce the number of communication steps.
This approach involves each agent holding a local allocation of the total resource, and through iterative processes of bargaining allocations with neighbors, the optimal allocation is gradually discovered.
Since the Lagrange dual problem of constraint coupled optimization is closely related to consensus optimization, the algorithms for consensus algorithm can be applied to the dual problem of constraint coupled optimization.
The algorithm proposed by \cite{xu2018dual} is an extension of the similar idea of splitting methods that appeared in \cite{xu2018bregman} to the dual problem.
The author in \cite{wu2022distributed} proposes a distributed algorithm over undirected graph based on P-EXTRA \cite{shi2015proximal} which has $O(1/k)$ rate.

In this article, we address a general constraint coupled optimization problem over time-varying directed graph.
We attempt to decompose the problem based on right-hand side allocation and solve the problem integrating augmented Lagrangian method and dual approach.
As a result, we propose a novel distributed algorithm utilizing only one doubly stochastic matrix and a fixed step size.
Note that the proposed algorithm does not require exchanges of the privacy information, such as primal variable, gradient, and its contribution to coupled constraints.
Consequently, the main contributions of this paper could be summarized as follows:
\begin{itemize}
    \item First, we propose a novel distributed algorithm that solves constraint coupled optimization problem.
        The proposed algorithm considers a time-varying and directed communication graph which is highly challenging.
        In each iteration, the algorithm requires only one communication with a doubly stochastic matrix and a fixed step size.
        The proposed algorithm exchanges only dual information and can be utilized even in privacy-sensitive environments.
    \item Second, we rigorously prove that, under the assumptions of strong convexity of the objective functions and bounded subgradients of the inequality constraints, the algorithm converges to the optimal solution with a convergence rate of $O(1/k)$ in the dual aspect.
        This rate establishes the efficiency of our approach in achieving optimal solutions rapidly.
\end{itemize}

This paper is organized as follows:
Section~\ref{sec:problem} introduces the convex optimization problem subject to globally coupled constraints and present our distributed optimization algorithm.
Section~\ref{sec:convergence} is dedicated to the convergence analysis of the proposed algorithm, and the numerical example is provided in Section~\ref{sec:simulation}.
Finally, Section~\ref{sec:conclusion} concludes this article.

\emph{Notation}:
Let $\mathbb{R}$, $\mathbb{R}_{+}$ and $\mathbb{Z}_{+}$ denote the sets of real, nonnegative real, nonnegative integer numbers, respectively.
The $n \times n$ identity matrix, and the vector of zero and one in $\mathbb{R}^n$ are denoted by $I_n$, $0_n$ and $1_n$, respectively, where the dimension subscripts are dropped when context makes them clear.
The notations $\lVert \cdot \rVert$ and $\lVert \cdot \rVert_1$ denote the Euclidean norm and the $\ell_1$ norm, respectively.
For any matrix $A \in \mathbb{R}^{n \times m}$, $A_{ij}$, $A_{i:}$ and $A_{:j}$ denote $(i,j)$-entry, $i$th row vector, and $j$th column vector of $A$, respectively.
The transpose of a vector $x$ and a matrix $A$ are denoted by $x^T$ and $A^T$, respectively.
The Kronecker product of two matrices $A$ and $B$ is denoted by $A \otimes B$.
Given a couple of vectors $x_1, \dots, x_n$, the notation $(x_1, \dots, x_n)$ presents the concatenated vector obtained by stacking, i.e., $(x_1, \dots, x_n) = {[x_1^T, x_2^T, \dots, x_n^T]}^T$.
For two vectors $x$ and $y$, $x \le y$ denotes an element-wise comparison.
The inner product is denoted by $\langle \cdot, \cdot \rangle$.
For any function $f : \mathbb{R}^d \to \mathbb{R}$, $\partial f(x)$ denotes the subdifferential of $f$ at $x$, and $\nabla f(x)$ denotes the gradient of $f$ at $x$ if $f$ is differentiable.
The projection of $x$ on $M$ is denoted $\proj_M(x) = \argmin_{z \in M} \lVert z - x \rVert$.

\section{Problem, Algorithm, and Assumptions}\label{sec:problem}
We consider a network of $N$ agents and the following constrained convex optimization problem over a time-varying directed graph $\mathcal{G}_k = (\mathcal{V}, \mathcal{E}_k)$ with vertex set $\mathcal{V} = \{1, \dots, N\}$ and edge set $\mathcal{E}_k \subset \mathcal{V} \times \mathcal{V}$ at time $k \in \mathbb{Z}_{+}$:
\begin{equation}\label{eq:original_problem}
    \begin{aligned}
        &\min_{x_i \in \mathbb{R}^{d_i}, \forall i \in \mathcal{V}} && f(\mathbf{x}) = \sum_{i \in \mathcal{V}} f_i(x_i) \\
        &\st && \sum_{i \in \mathcal{V}} A_i x_i = \sum_{i \in \mathcal{V}} b_i, \quad \sum_{i \in \mathcal{V}} g_i(x_i) \le 0,
    \end{aligned}
\end{equation} 
where $x_i \in \mathbb{R}^{d_i}$ is the local decision variable of agent $i$ and $\mathbf{x} = (x_1,\dots,x_N) \in \mathbb{R}^{\sum d_i}$.
The variables $A_i \in \mathbb{R}^{p \times d_i}$, $b_i \in \mathbb{R}^p$ and the function $g_i = (g_{i1}, \dots, g_{iq}): \mathbb{R}^{d_i} \rightarrow \mathbb{R}^{q}$ are $i$th contribution of the globally coupled constraints.
The functions $f_i$, $g_i$, the matrix $A_i$ and vector $b_i$ are privately known by agent $i$ only.
To guarantee the problem is solvable and has strong duality, the following assumption is made throughout the paper.
\begin{assumption}[Convexity] \label{asm:convexity}
    \begin{enumerate*}[label=(\alph*)]
        \item All $f_i: \mathbb{R}^{d_i} \rightarrow \mathbb{R}$ are $\mu$-strongly convex.
        \item Each $g_i$ is a component-wise convex function.
    \end{enumerate*}
\end{assumption}
\begin{assumption}[Subgradient Boundedness] \label{asm:bound_subgradient_ineq}
    There exists $L_g > 0$ such that $\norm{s} \le L_g$ for all $s \in \partial g_{il}(x)$, $x \in \mathbb{R}^{d_i}$, $i \in \mathcal{V}$, and $l = 1,\dots,q$. 
\end{assumption}
\begin{assumption}[Slater's condition]\label{asm:slater}
    There exists a feasible point $\tilde{\mathbf{x}}= (\tilde{x}_1, \dots, \tilde{x}_N)$ such that $\sum_{i \in \mathcal{V}} A_i \tilde{x}_i - b_i = 0$ and $\sum_{i \in \mathcal{V}} g_i(\tilde{x}_i) < 0$.
\end{assumption}
Moreover, we consider the following network assumption related to the communication on the time-varying directed graph.
\begin{assumption}
    For all $k \in \mathbb{Z}_+$,
    \begin{enumerate*}[label=(\alph*)]
        \item \emph{(Strong connectivity)} $\mathcal{G}_k$ is strongly connected, and
        \item \emph{(Aperiodicity)} $\mathcal{G}_k$ has self-loops, i.e., $(i,i) \in \mathcal{E}_k$ for all $i \in \mathcal{V}$.
    \end{enumerate*}
\end{assumption}
The in-neighbor set of agent $i$ in graph $\mathcal{G}_k$ at time $k$ is defined as $\mathcal{N}_{\text{in}}^k(i) = \{ j \in \mathcal{V} \mid (j, i) \in \mathcal{E}_k \}$.

\subsection{Problem Reformulation}
We reformulate \eqref{eq:original_problem} to develop a distributed algorithm.
Introduced in \cite{bertsekas2016nonlinear}, utilizing auxiliary variables $v_i, z_i$ for $i \in \mathcal{V}$, the problem \eqref{eq:original_problem} can be equivalently converted to
\begin{equation}\label{eq:reformed_problem}
    \begin{aligned}
        &\min_{\substack{\{x_i \in \mathbb{R}^{d_i}\}_{i \in \mathcal{V}} \\ \{v_i\}_{i\in\mathcal{V}}, \{z_i\}_{i\in\mathcal{V}}}} &&\sum_{i \in \mathcal{V}} f_i(x_i) \\
        &\text{subject to}\; && A_i x_i - b_i = v_i, \;\; g_i(x_i) \le z_i, \\
        &&& \sum_{i\in\mathcal{V}} v_i = 0,\;\; \sum_{i\in\mathcal{V}} z_i = 0.
    \end{aligned}
\end{equation}
The auxiliary variables decouple the dense constraints in \eqref{eq:original_problem} into independent constraints and zero-sum constraints.
Let $x_i^*, v_i^*, z_i^*$, for $i \in \mathcal{V}$ be an optimal solution of the problem.
The corresponding Karush-Kuhn-Tucker (KKT) condition can be divided into the local conditions and the global conditions.
The local conditions for agent $i$ are given by the system:
\begin{equation}\label{eq:kkt_local}
    \begin{aligned} 
        \partial f_i(x_i^*) + A_i^T u_i^* + \sum_{l = 1}^q y_{il}^* \partial g_{il}(x_i^*) &\ni 0, \\
        A_i x_i^* - b_i - v_i^* &= 0, \\
        \langle y_i^*, g_i(x_i^*) - z_i^* \rangle &= 0, \\
        y_i^* &\ge 0,
    \end{aligned}
\end{equation}
where $u_i^*$ and $y_i^*$ are local Lagrange multipliers associated with agent $i$'s independent equality and inequality constraints, respectively.
The global conditions are the network-wise coupled zero-sum constraints and corresponding constraints for the multipliers:
\begin{equation}\label{eq:kkt_global}
    \begin{aligned}
        u^* := u_1^* = u_2^* = \cdots &= u_N^*, \\
        y^* := y_1^* = y_2^* = \cdots &= y_N^*, \\
        v_1^* + \dots + v_N^* &= 0, \\
        z_1^* + \dots + z_N^* &= 0.
    \end{aligned}
\end{equation}

\subsection{Algorithm Development}
The distributed algorithm for solving~\eqref{eq:reformed_problem} has to ensure two conditions: 
\begin{enumerate*}[label=\arabic*)]
    \item the multipliers should be in the consensus space $\mathcal{C}_N$ where all elements have the same value, i.e., $(r_1, \dots, r_N) \in \mathcal{C}_N$ implies $r_1 = \cdots = r_N$, and
    \item the auxiliary variables should be in the zero-sum space $\mathcal{C}_N^\perp$, as described in KKT conditions~\eqref{eq:kkt_global}.
\end{enumerate*}
We proceed by decomposing optimization problem~\eqref{eq:reformed_problem} into a two-level structure featuring independent local subproblems and a master problem:
\begin{equation}\label{eq:master}
    \begin{aligned}
        &\min_{\{v_i\}_{i\in\mathcal{V}}, \{z_i\}_{i\in\mathcal{V}}} && \sum_{i\in\mathcal{V}} h_i(v_i, z_i) \\
        &\text{subject to} && \sum_{i\in\mathcal{V}} v_i = 0,\;\; \sum_{i\in\mathcal{V}} z_i = 0, \\
    \end{aligned}
\end{equation}
where the $i$th subproblem which is defined the primal function, is given by
\begin{equation}\label{eq:subproblem}
    \begin{aligned}
        h_i(v_i, z_i) =\; &\min_{x_i} && f_i(x_i) \\
                        &\sst && A_i x_i - b_i = v_i,\;\; g_i(x_i) \le z_i.
    \end{aligned}
\end{equation}
If $h_i$ is finite at $(v_i, z_i)$, then the subdifferential of $h_i$ at $(v_i, z_i)$ is equal to the set of geometric multipliers for minimizing $f_i(x)$ subject to $A_i x - b_i = v_i$ and $g_i(x) \le z_i$.
This subdifferential of $h_i$ at $(v_i, z_i)$ is nonempty if the feasible set $\{x \in \mathbb{R}^{d_i} \mid A_i x - b_i = v_i, g_i(x) \le z_i  \}$ is nonempty \cite{bertsekas2003convex}.

As in \cite{bertsekas2016nonlinear}, utilizing the projected subgradient method for the master problem yields the following update with a parameter $\gamma$:
\begin{equation}\label{eq:global_subgradient}
    \begin{aligned}
        \mathbf{v}^{k+1} &= \proj_{\mathcal{C}_N^\perp}(\mathbf{v}^k + \gamma \mathbf{u}^{k*}), \\
        \mathbf{z}^{k+1} &= \proj_{\mathcal{C}_N^\perp}(\mathbf{z}^k + \gamma \mathbf{y}^{k*}),
    \end{aligned}
\end{equation}
where the bold symbol represents the stacking of the corresponding vector, i.e., $\mathbf{v}^k = (v_1^k, \dots, v_N^k)$, and similarly for other variables.
However, we can not guarantee geometric multipliers $u_i^{k*}$ and $y_i^{k*}$ exist for all $v_i^k$ and $z_i^k$ \cite{bertsekas2016nonlinear}, \cite[Remark 2.9]{camisa2021distributed} and converge to consensus space $\mathcal{C}_N$.
In order to handle these issues, we exploit the augmented Lagrangian method for approximating geometric multipliers with a consensus process.

We first define a time-varying matrix $W^k \in \mathbb{R}_{+}^{N \times N}$ corresponding to the time-varying graph $\mathcal{G}_k$ satisfying the following assumption:
\begin{assumption}[Weight Matrices]\label{asm:weight_mat}
    \begin{enumerate*}[label=(\alph*)]
        \item The matrix $W^k$ has the same structure corresponding to the time-varying digraph $\mathcal{G}_k = (\mathcal{V}, \mathcal{E}_k)$ for $k \ge 1$, i.e., $W^k_{ij} > 0$, for all $(j,i) \in \mathcal{E}_k$ and $W^k_{ij} = 0$, otherwise.
        \item $W^k$ is doubly stochastic matrix, i.e., $W^k 1_N = {W^k}^T 1_N = 1_N$ and $W^0 = I$.
    \end{enumerate*}
\end{assumption}
At iteration $k$ for agent $i$, the approximation of the subgradient of $h_i$ is computed by
\begin{equation*}
    \begin{aligned}
        p_i^k &= \sum_{j \in \mathcal{N}_{\text{in}}^k(i)} W^k_{ij} u_j^k,\\
        q_i^k &= \sum_{j \in \mathcal{N}_{\text{in}}^k(i)} W^k_{ij} y_j^k,\\
        x_i^{k+1} &= \argmin_{x_i} \mathcal{L}^i_\rho (x_i, p_i^k, q_i^k, v_i^k, z_i^k),\\
        u_i^{k+1} &= p_i^k + \rho (A_i x_i^{k+1} - b_i - v_i^k), \\
        y_i^{k+1} &= \left[ q_i^k + \rho (g_i(x_i^{k+1}) - z_i^k) \right]_+,
    \end{aligned}
\end{equation*}
where the augmented Lagrangian function $\mathcal{L}_\rho^i$ is given by
\begin{equation*}
    \begin{aligned}
        &\mathcal{L}_\rho^i (x_i, p_i^k, q_i^k, v_i^k, z_i^k) \\
        &= f_i(x_i) + \langle p_i^k, A_i x_i - b_i - v_i^k \rangle + \frac{\rho}{2} \lVert A_i x_i - b_i - v_i^k \rVert^2 \\
        &\quad + \frac{1}{2\rho} \left\lVert [ q_i^k + \rho (g_i(x_i) - z_i^k) ]_+ \right\rVert^2 - \frac{1}{2\rho} \lVert q_i^k \rVert^2,
    \end{aligned}
\end{equation*}
and $[\cdot]_+$ denotes the projection on the first quadrant.

The approximated subgradient now is obtained, but since \eqref{eq:global_subgradient} relies on the global information, the projection on the zero-sum space needs to be replaced by a suitable distributed operation.
Since, from the doubly stochasticity of $W$, $1^T (I - W) = 0$.
Leveraging this property, with the additional assumption of initial value $v_i^0 = 0$ and $z_i^0 = 0$, we approximate the projection as the procedure \eqref{eq:approx_proj_start}-\eqref{eq:approx_proj_end}.
It is equal to
\begin{equation*}
    \begin{aligned}
        \mathbf{v}^{k+1} &= \mathbf{v}^k + \gamma ((I - W^{k+1}) \otimes I_p) \mathbf{u}^{k+1}, \\
        \mathbf{z}^{k+1} &= \mathbf{z}^k + \gamma ((I - W^{k+1}) \otimes I_q) \mathbf{y}^{k+1},
    \end{aligned}
\end{equation*}
then, for all $k \in \mathbb{Z}_+$, the auxiliary variables always remain in the zero-sum space $\mathcal{C}_N^\perp$, i.e., $(1_N \otimes \eta_p)^T \mathbf{v}^k = 0$ and $(1_N \otimes \eta_q)^T \mathbf{z}^k = 0$ with arbitrary $\eta_p \in \mathbb{R}^p$ and $\eta_q \in \mathbb{R}^q$.
The overall update rule is summarized in Algorithm~\ref{alg:DCO}.

\begin{algorithm}[t]
    \caption{Distributed Coupled Optimization}
    \label{alg:DCO}
    \begin{algorithmic}
        \State \textbf{Initialization:}
        \State \hspace{1em} Arbitrary $u_i^0 = p_i^0,\; y_i^0 = q_i^0$ and 
        \State \hspace{1.5em} $v_i^0 = 0,\; z_i^0 = 0, \; i = 1,\dots,N$.
        \Loop
            \ForAll{$i \in \{1, \dots, N\}$}
                \State \makebox[0.875\linewidth]{$x_i^{k+1} \gets \argmin_{x_i} \mathcal{L}_\rho^i (x_i, p_i^k, q_i^k, v_i^k, z_i^k)$\hfill\refstepcounter{equation}\llap{(\theequation)}\label{eq:algo_primal}}
                \State \makebox[0.875\linewidth]{$u_i^{k+1} \gets p_i^k + \rho(A_i x_i^{k+1} - b_i - v_i^k)$\hfill\refstepcounter{equation}\llap{(\theequation)}\label{eq:algo_dual_eq}}
                \State \makebox[0.875\linewidth]{$y_i^{k+1} \gets \positive{q_i^k + \rho (g_i(x_i^{k+1}) - z_i^k)}$\hfill\refstepcounter{equation}\llap{(\theequation)}\label{eq:algo_dual_inq}}
                \State
                \State Synchronization
                \State
                \State \makebox[0.875\linewidth]{$p_i^{k+1} \gets \sum_j W^{k+1}_{ij} u_j^{k+1}$\hfill\refstepcounter{equation}\llap{(\theequation)}\label{eq:approx_proj_start}}
                \State \makebox[0.875\linewidth]{$q_i^{k+1} \gets \sum_j W^{k+1}_{ij} y_j^{k+1}$\hfill\refstepcounter{equation}\llap{(\theequation)}}
                \State 
                \State \makebox[0.875\linewidth]{$v_i^{k+1} \gets v_i^k + \gamma (u_i^{k+1} - p_i^{k+1})$\hfill\refstepcounter{equation}\llap{(\theequation)}}
                \State \makebox[0.875\linewidth]{$z_i^{k+1} \gets z_i^k + \gamma (y_i^{k+1} - q_i^{k+1})$\hfill\refstepcounter{equation}\llap{(\theequation)}\label{eq:approx_proj_end}}
            \EndFor
            \State $k \gets k + 1$
        \EndLoop
    \end{algorithmic}
\end{algorithm}

\begin{remark}
    A direct approach to solving \eqref{eq:master} requires adding the constraints $(v_i, z_i) \in Z_i$ for $i \in \mathcal{V}$ where each $Z_i$ is the set of $(v_i, z_i)$ that the subproblem~\eqref{eq:subproblem} is feasible \cite{bertsekas2016nonlinear}.
    In a distributed scheme, it is difficult to simultaneously consider feasible regions $Z_i$ for $i \in \mathcal{V}$ and zero-sum constraints so that we require a relaxation of the primal problem that is amenable to distributed implementation.
    The algorithm proposed in \cite{notarnicola2019constraint,camisa2021distributed} relaxed the constraints allowing a non-negative violation $\rho$ and adding a penalty term $M \rho$ to the objective function, where $M$ is a sufficiently large scalar.
    However, the method using non-negative violations is heavily influenced by the coefficient $M$ and estimates the multiplier more indirectly than the augmented Lagrangian.
\end{remark}

\section{Convergence Analysis}\label{sec:convergence}
In this section, we study the convergence property of Algorithm~\ref{alg:DCO}.
Before proceeding to the main result, let us establish the dual analysis, which are useful in the subsequent analysis.

We define the dual function of the $i$th subproblem with optimal auxiliary variables $v_i^*$ and $z_i^*$:
\begin{equation*}
    \begin{aligned}
        \varphi_i(\zeta) &= \min_{x_i} \mathcal{L}_0^i(x_i, u, y) \\
        &= \min_{x_i} f_i(x_i) + \product{u, A_i x_i - b_i - v_i^*} \\
        &\qquad\qquad + \product{y, g_i(x_i) - z_i^*},
    \end{aligned}
\end{equation*}
where $\zeta = (u, y)$.
By the strong convexity, the updates~\eqref{eq:algo_primal}-\eqref{eq:algo_dual_inq} are equivalent to the proximal maximization of the dual function~\cite{bertsekas2016nonlinear}.
Then, Algorithm~\ref{alg:DCO} can be written as
\begin{subequations}
    \begin{align}
        \zeta_i^{k+1} &= \argmin_{\zeta \in \mathbb{R}^p_{\vphantom{g}} \times \mathbb{R}^q_+} \left\langle \xi_i^k, \zeta \right\rangle - \varphi_i(\zeta) + \frac{1}{2\rho}\lVert \zeta - \hat\zeta_i^k \rVert^2, \label{eq:dual_update} \\
        \hat\zeta_i^{k+1} &= \mathbf{W}^{k+1}_{i:} \boldsymbol{\zeta}^{k+1}, \\
        \xi_i^{k+1} &= \xi_i^k + \gamma (\zeta_i^{k+1}- \hat\zeta_i^{k+1}). \label{eq:dual_end}
    \end{align}
\end{subequations}
where $\hat\zeta_i^k = (p_i^k, q_i^k)$, $\xi_i^k = (v_i^k - v_i^*, z_i^k - z_i^*)$, $\mathbf{W}^{k+1}_{i:} = W^{k+1}_{i:} \otimes I_{p+q}$ and $\boldsymbol{\zeta}^k = (\zeta^k_1,\dots,\zeta^k_N)$.
The first two terms in \eqref{eq:dual_update} is the negative of a dual function for the subproblem $h_i(v_i^k, z_i^k)$, i.e.,
\begin{equation*}
    \begin{aligned}
    \varphi_i(\zeta) - \left\langle \xi_i^k, \zeta \right\rangle = \min_{x_i} &\; f_i(x_i) + \product{u, A_i x_i - b_i - v_i^k} \\
        &\quad + \product{y, g_i(x_i) - z_i^k},
    \end{aligned}
\end{equation*}
and the last proximal term comes from the Fenchel duality of the quadratic penalty term.
We first establish a lemma about the smoothness of the dual functions.

\begin{lemma}[$L$-smoothness of $\varphi_i$]\label{lem:dual_Lipschitz}
    Let Assumption~\ref{asm:convexity},~\ref{asm:bound_subgradient_ineq} hold.
    The dual function $\varphi_i$ has Lipschitz continuous gradient with a constant $L$ for $i = 1,\dots,N$.
\end{lemma}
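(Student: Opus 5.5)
The plan is to compute the gradient of $\varphi_i$ explicitly through the unique Lagrangian minimizer, and then bound how fast that minimizer moves with $\zeta$.

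\textbf{Setup.} Write $G_i(x) = (A_i x,\, g_i(x))$ and $c_i = (b_i + v_i^*,\, z_i^*)$. Then
\[
\mathcal{L}_0^i(x,\zeta) = f_i(x) + \langle \zeta, G_i(x) - c_i\rangle .
\]
Throughout I work on the natural domain $\zeta \in \mathbb{R}^p \times \mathbb{R}^q_+$, the set used in \eqref{eq:dual_update}. For such $\zeta$ we have $y \ge 0$, so by Assumption~\ref{asm:convexity} the map $x \mapsto \mathcal{L}_0^i(x,\zeta)$ is $\mu$-strongly convex and finite-valued. It is therefore coercive and has a unique minimizer, which I denote $x_i(\zeta)$.

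\textbf{Step 1: candidate gradient and first-order bounds.} Fix $\zeta,\zeta'$ in the domain and put $x = x_i(\zeta)$, $x' = x_i(\zeta')$. Evaluating at the suboptimal point $x$ gives
\[
\varphi_i(\zeta') \le \mathcal{L}_0^i(x,\zeta') = \varphi_i(\zeta) + \langle G_i(x) - c_i,\, \zeta' - \zeta\rangle .
\]
Symmetrically,
\[
\varphi_i(\zeta') = \mathcal{L}_0^i(x',\zeta) + \langle G_i(x') - c_i,\, \zeta' - \zeta\rangle \ge \varphi_i(\zeta) + \langle G_i(x') - c_i,\, \zeta' - \zeta\rangle .
\]
Hence
\[
0 \le \varphi_i(\zeta) + \langle G_i(x) - c_i,\, \zeta'-\zeta\rangle - \varphi_i(\zeta') \le \lVert G_i(x) - G_i(x')\rVert\,\lVert \zeta'-\zeta\rVert .
\]
Once Step 2 bounds the last term by $L\lVert\zeta'-\zeta\rVert^2$, this shows two things at once:
\begin{itemize}
\item $\nabla\varphi_i(\zeta) = G_i(x_i(\zeta)) - c_i$, as a Danskin-type formula that needs no smoothness of $f_i$ or $g_i$;
\item the quadratic upper (descent) bound for the concave function $\varphi_i$.
\end{itemize}
For concave functions the quadratic bound is equivalent to $L$-Lipschitz continuity of the gradient. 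I will also state the Lipschitz property directly as $\lVert G_i(x) - G_i(x')\rVert \le L\lVert \zeta-\zeta'\rVert$.

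\textbf{Step 2: Lipschitz dependence of the minimizer.} Strong convexity and optimality of $x$ for $\mathcal{L}_0^i(\cdot,\zeta)$ give
\[
\mathcal{L}_0^i(x',\zeta) \ge \mathcal{L}_0^i(x,\zeta) + \tfrac{\mu}{2}\lVert x'-x\rVert^2 .
\]
The analogous inequality holds with the roles of $(x,\zeta)$ and $(x',\zeta')$ swapped. Adding the two, the $f_i$ terms cancel and we get
\[
\mu\lVert x - x'\rVert^2 \le \langle \zeta - \zeta',\, G_i(x') - G_i(x)\rangle \le \lVert \zeta-\zeta'\rVert\, L_G \lVert x - x'\rVert .
\]
Here $L_G$ is a Lipschitz constant of $G_i$. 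By Assumption~\ref{asm:bound_subgradient_ineq} each convex $g_{il}$ is $L_g$-Lipschitz, because the mean value inequality for convex functions bounds $|g_{il}(x)-g_{il}(x')|$ by $\sup\lVert s\rVert\,\lVert x-x'\rVert$. Therefore
\[
L_G^2 = \max_i \lVert A_i\rVert^2 + q L_g^2 .
\]
This yields $\lVert x-x'\rVert \le (L_G/\mu)\lVert\zeta-\zeta'\rVert$, and consequently
\[
\lVert \nabla\varphi_i(\zeta)-\nabla\varphi_i(\zeta')\rVert \le L\lVert\zeta-\zeta'\rVert, \qquad L = \frac{L_G^2}{\mu},
\]
with a constant uniform in $i$.

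\textbf{Main obstacle.} I expect the delicate point to be justifying differentiability with nonsmooth $f_i$ and $g_i$. A domain subtlety also enters here: $\mathcal{L}_0^i$ need not be convex in $x$ when some $y_l<0$, so $\varphi_i$ may be $-\infty$ there. My approach is to avoid subgradient calculus entirely and use the two-sided sandwich in Step 1, which only uses minimizer optimality and the bound from Step 2. I will state the lemma on $\mathbb{R}^p\times\mathbb{R}^q_+$, with one-sided derivatives at the boundary $y_l=0$; this suffices for the proximal update \eqref{eq:dual_update}.
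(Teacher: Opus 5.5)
Your proof is correct. It yields the paper's constant $L=\max_i(\lVert A_i\rVert^2+qL_g^2)/\mu$, but you obtain the central estimate differently.

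The paper works at the level of subdifferentials. It uses $\mu$-strong monotonicity of $F_i(\cdot,u,y)=\partial_x\mathcal{L}_0^i(\cdot,u,y)$ and selects $s_f\in\partial f_i(x')$ and $s^g_{il}\in\partial g_{il}(x')$ realizing $0\in F_i(x',u',y')$. Shifting the multiplier then gives the element $A_i^T(u-u')+\sum_l(y_l-y_l')s^g_{il}$ of $F_i(x',u,y)$, whose norm is bounded by the subgradient bound of Assumption~\ref{asm:bound_subgradient_ineq}. The formula $\nabla\varphi_i(\zeta)=(A_ix-b_i-v_i^*,\,g_i(x)-z_i^*)$ is then asserted without proof.

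You instead add the two quadratic-growth inequalities at the minimizers so that $f_i$ cancels. This avoids the subdifferential sum rule and uses Assumption~\ref{asm:bound_subgradient_ineq} only through the global $L_g$-Lipschitz continuity of each $g_{il}$. Your Step~1 sandwich also actually proves the Danskin-type gradient formula. The paper's route is shorter and reuses the monotone-operator facts from its appendix. Yours is more elementary and self-contained, and it is more careful in two places:
\begin{itemize}
\item The restriction to $\mathbb{R}^p\times\mathbb{R}^q_+$ is genuinely needed. The paper's claim that $\mathcal{L}_0^i(\cdot,u,y)$ is $\mu$-strongly convex, and its identification of $F_i$ with $\partial_x\mathcal{L}_0^i$, both silently require $y\ge 0$.
\item Your stacked estimate $\lVert G_i(x)-G_i(x')\rVert\le\sqrt{\lVert A_i\rVert^2+qL_g^2}\,\lVert x-x'\rVert$ is what really yields the stated $L_i$. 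The paper's intermediate bound $\lVert A_i(x-x')\rVert+qL_g\lVert x-x'\rVert$ would give a larger constant.
\end{itemize}

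Two of your side remarks are slightly off, though neither affects the proof.
\begin{itemize}
\item Under Assumption~\ref{asm:bound_subgradient_ineq} each $g_{il}$ grows at most linearly. So for $y_l<0$ the function $\mathcal{L}_0^i(\cdot,\zeta)$ is still bounded below and $\varphi_i$ is finite. What fails there is uniqueness of the minimizer, and hence differentiability. For example, take $f_i(x)=\frac{\mu}{2}x^2$, $A_i=1$, $g_i(x)=|x|$, zero offsets and $y=-1$. Then $\varphi_i=-(1+|u|)^2/(2\mu)$ near $u=0$, which has a kink at $u=0$.
\item Your sandwich gives the quadratic bound with constant $L$ rather than $L/2$. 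By the standard equivalence this corresponds only to a $2L$-Lipschitz gradient. This is harmless, since Step~2 gives the $L$-Lipschitz bound directly.
\end{itemize}
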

\begin{IEEEproof}
    By Assumption~\ref{asm:convexity}, the Lagrangian function $\mathcal{L}_0^i(x, u, y)$ is $\mu$-strongly convex.
    We define 
    \begin{equation*}
        \begin{aligned}
            F_i(x, u, y) 
            &= \partial_x \mathcal{L}_0^i(x, u, y) \\
            &= \partial f_i(x) + A_i^T u + \sum_{l = 1}^q y_l \partial g_{il}(x).
        \end{aligned}
    \end{equation*}
    Consider two distinct multipliers $\zeta = (u, y)$ and $\zeta' = (u', y')$, with corresponding minimizers $x$ and $x'$ such that $0 \in F_i(x,u,y)$ and $0 \in F_i(x',u',y')$.
    By the property of strong convexity~\cite[Lemma 3]{zhou2018fenchel}, we have $\mu \lVert x - x' \rVert \le \lVert s - s' \rVert$ for any $s \in F_i(x,u,y)$ and any $s' \in F_i(x',u,y)$.
    Then, since $0 \in F_i(x, u, y)$ and $0 \in F_i(x', u', y')$, we obtain
    \begin{equation*}
        \begin{aligned}
            \lVert x - x' \rVert 
            &\le \frac{1}{\mu} \left\lVert s_f + A_i^T u + \sum_{l = 1}^q y_l s^g_{il} \right\rVert \\
            &\le \frac{1}{\mu} \Bigg\lVert s_f + A_i^T u'+ \sum_{l = 1}^q y'_l s^g_{il} \\
            &\qquad\qquad + A_i^T(u - u') + \sum_{l = 1}^q (y_l  - y'_l) s^g_{il} \Bigg\rVert\\
            &= \frac{1}{\mu} \left\lVert A_i^T(u - u') + \sum_{l = 1}^q (y_l  - y'_l) s^g_{il} \right\rVert \\
            &\le \frac{\sqrt{\norm{A_i}^2 + q L_g^2}}{\mu} \lVert (u - u', y - y') \rVert
        \end{aligned}
    \end{equation*}
    where $s_f \in \partial f_i(x')$ satisfying $s_f + A_i^T u'+ \sum_{l = 1}^q y'_l s^g_{il} = 0 \in F_i(x', u', y')$ and $s^g_{il} \in \partial g_{il}(x')$.
    Since for $0 \in F_i(x,u,y)$
    \begin{equation*}
        \nabla \varphi_i(\zeta) =
        \begin{bmatrix}
            A_i x - b - v_i^* \\
            g_i(x) - z_i^*
        \end{bmatrix},
    \end{equation*}
    we have
    \begin{equation*}
        \begin{aligned}
            \norm{\nabla\varphi_i\left(\zeta \right) - \nabla\varphi_i\left(\zeta'\right)} 
            &\le \lVert A_i (x - x') \rVert + qL_g \lVert x - x' \rVert \\
            &\le L_i \norm{(u, y) - (u', y')}
        \end{aligned}
    \end{equation*}
    where $L_i = (\norm{A_i}^2 + qL_g^2) / \mu$.
    Let $L = \max_i L_i$, then we complete the proof.
\end{IEEEproof}

We now state the main convergence result for the proposed algorithm.
\begin{theorem}\label{thm:convergence}
    Let Assumption~\ref{asm:convexity}-\ref{asm:weight_mat} and the initial condition $(\mathbf{v}^0, \mathbf{z}^0) = 0$ hold.
    Define a geometric multiplier $\zeta^* = (u^*, y^*)$ for \eqref{eq:original_problem}, and $C_0$ be a constant depending on the initial value $u_i^0$ and $y_i^0$ for $i = 1, \dots, N$.
    Then, for sufficiently large $K$, with $\gamma = 1/\rho$, and $\rho < 1/2L$, we have 
    \begin{equation*}
        \sum_{k = 0}^{K-1} \sum_{i=1}^N \left( \varphi_i(\zeta^*) - \varphi_i(\zeta_i^k) \right) \le \frac{C_0}{2\rho}.
    \end{equation*}
\end{theorem}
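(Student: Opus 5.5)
We will run a Lyapunov-type argument on the dual reformulation \eqref{eq:dual_update}--\eqref{eq:dual_end}. The potential is
\begin{equation*}
V^k = \frac{1}{2\rho}\norm{\hat{\boldsymbol\zeta}^k - \mathbf{1}\otimes\zeta^*}^2 + \frac{1}{2\gamma}\norm{\boldsymbol\xi^k}^2 ,
\end{equation*}
where $\boldsymbol\xi^k$ stacks $\xi_i^k$ and $\mathbf{1}\otimes\zeta^*$ denotes $N$ copies of $\zeta^*$. The goal is a per-iteration inequality
\begin{equation*}
\sum_i\left(\varphi_i(\zeta^*)-\varphi_i(\zeta_i^{k+1})\right) \le V^k - V^{k+1} - (\text{nonnegative terms}).
\end{equation*}
Telescoping from $k=0$ to $K-1$ then bounds the sum by $V^0$ (up to an index shift handled by the $W^0=I$ convention). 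With $\gamma=1/\rho$ both weights in $V^0$ equal $1/(2\rho)$, and $V^0$ depends only on $u_i^0,y_i^0$ (giving $C_0$) and on $\xi_i^0=(-v_i^*,-z_i^*)$. Since $\xi^0$ is also fixed by the problem data, we absorb it into $C_0$.

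The first step is to use the optimality condition of the strongly convex proximal subproblem \eqref{eq:dual_update}, which gives
\begin{equation*}
\xi_i^k + \tfrac{1}{\rho}(\zeta_i^{k+1}-\hat\zeta_i^k) \in \partial(-\varphi_i)(\zeta_i^{k+1}) + N_{\mathbb{R}^p\times\mathbb{R}^q_+}(\zeta_i^{k+1}).
\end{equation*}
We test this against $\zeta^*-\zeta_i^{k+1}$. Because $\zeta^*$ is feasible, the normal-cone term has the right sign. By Lemma~\ref{lem:dual_Lipschitz}, $-\varphi_i$ is convex and $L$-smooth, so a descent-lemma style bound holds with a residual $\tfrac{L}{2}\norm{\zeta_i^{k+1}-\zeta_i^{k}}^2$, or alternatively with the cocoercivity term $\tfrac{1}{2L}\norm{\nabla\varphi_i(\zeta_i^{k+1})-\nabla\varphi_i(\zeta^*)}^2$. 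This yields
\begin{equation*}
\varphi_i(\zeta^*)-\varphi_i(\zeta_i^{k+1}) \le \langle \xi_i^k, \zeta^*-\zeta_i^{k+1}\rangle + \tfrac{1}{\rho}\langle \zeta_i^{k+1}-\hat\zeta_i^k, \zeta^*-\zeta_i^{k+1}\rangle.
\end{equation*}
The second inner product is expanded with the three-point identity into
\begin{equation*}
\tfrac{1}{2\rho}\left(\norm{\hat\zeta_i^k-\zeta^*}^2-\norm{\zeta_i^{k+1}-\zeta^*}^2-\norm{\zeta_i^{k+1}-\hat\zeta_i^k}^2\right).
\end{equation*}

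The second step sums over $i$ and uses the network structure. Since $W^{k+1}$ is doubly stochastic and $\norm{\cdot}^2$ is convex, $\norm{\hat{\boldsymbol\zeta}^{k+1}-\mathbf 1\otimes\zeta^*}^2\le\norm{\boldsymbol\zeta^{k+1}-\mathbf 1\otimes\zeta^*}^2$, which lets the $\hat\zeta$ terms telescope. For the cross term $\sum_i\langle\xi_i^k,\zeta^*-\zeta_i^{k+1}\rangle$, we first note that $\sum_i \xi_i^k=0$ because the auxiliary variables stay in the zero-sum space (and $\sum_i v_i^* = 0$, $\sum_i z_i^*=0$). Hence $\sum_i\langle\xi_i^k,\zeta^*\rangle=0$ and the cross term reduces to $-\langle\boldsymbol\xi^k,\boldsymbol\zeta^{k+1}\rangle$. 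Using $\boldsymbol\xi^{k+1}-\boldsymbol\xi^k=\gamma(I-\mathbf W^{k+1})\boldsymbol\zeta^{k+1}$, we write
\begin{equation*}
\norm{\boldsymbol\xi^{k+1}}^2-\norm{\boldsymbol\xi^k}^2 = 2\gamma\langle\boldsymbol\xi^k,(I-\mathbf W^{k+1})\boldsymbol\zeta^{k+1}\rangle + \gamma^2\norm{(I-\mathbf W^{k+1})\boldsymbol\zeta^{k+1}}^2 ,
\end{equation*}
which lets us match $\langle\boldsymbol\xi^k,\cdot\rangle$ against the decrease of $\tfrac{1}{2\gamma}\norm{\boldsymbol\xi}^2$.

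This matching is the main obstacle. The cross term involves $\boldsymbol\zeta^{k+1}$, while the $\xi$-recursion produces $(I-\mathbf W^{k+1})\boldsymbol\zeta^{k+1}$. Our first attempt will be to split $\boldsymbol\zeta^{k+1}$ into its consensus part and its disagreement part. The consensus part is killed by $\sum_i\xi_i^k=0$. For the disagreement part we need $I-\mathbf W^{k+1}$ to be invertible on $\mathcal C_N^\perp$ with bounded inverse. For a general doubly stochastic $W$ over a digraph this matrix is not symmetric, so we will instead work with the identity
\begin{equation*}
\boldsymbol\zeta^{k+1}-\hat{\boldsymbol\zeta}^{k+1}=(I-\mathbf W^{k+1})\boldsymbol\zeta^{k+1}.
\end{equation*}
This lets us rewrite the cross term as $-\langle\boldsymbol\xi^k,\hat{\boldsymbol\zeta}^{k+1}\rangle-\tfrac1\gamma\langle\boldsymbol\xi^k,\boldsymbol\xi^{k+1}-\boldsymbol\xi^k\rangle$.

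The leftover quadratic error terms must then be dominated by the negative term $-\tfrac{1}{2\rho}\norm{\zeta_i^{k+1}-\hat\zeta_i^k}^2$ together with the smoothness residual. These residuals are of the form $\gamma^2\norm{(I-\mathbf W)\boldsymbol\zeta}^2\le 4\gamma^2\norm{\cdot}^2$ and $L\norm{\cdot}^2$. The choices $\gamma=1/\rho$ and $\rho<1/(2L)$ are what make this absorption work. We expect that "sufficiently large $K$" is needed only to absorb a bounded boundary term from the last iterate into $C_0$.
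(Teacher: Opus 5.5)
\textbf{There is a genuine gap, at the step you yourself flag as the main obstacle; your proposed fix does not close it.}

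Your first step is sound. The inclusion should read $-\xi_i^k-\tfrac1\rho(\zeta_i^{k+1}-\hat\zeta_i^k)\in\partial(-\varphi_i)(\zeta_i^{k+1})+N_{\mathbb{R}^p\times\mathbb{R}^q_+}(\zeta_i^{k+1})$, but the inequality you draw from it is correct. It uses only concavity, not Lemma~\ref{lem:dual_Lipschitz}, so it is leaner than the paper's $\Phi_i,T_i,P_i,Q_i$ bookkeeping. The telescoping of the $\hat\zeta$ terms via nonexpansiveness of $\mathbf W^{k+1}$ is also fine.

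The trouble starts with the cross term. Substituting $\boldsymbol\zeta^{k+1}=\gamma^{-1}(\boldsymbol\xi^{k+1}-\boldsymbol\xi^k)+\hat{\boldsymbol\zeta}^{k+1}$ gives
\[
-\langle\boldsymbol\xi^k,\boldsymbol\zeta^{k+1}\rangle=\tfrac{1}{2\gamma}\bigl(\lVert\boldsymbol\xi^k\rVert^2-\lVert\boldsymbol\xi^{k+1}\rVert^2\bigr)+\tfrac{\gamma}{2}\lVert(I-\mathbf W^{k+1})\boldsymbol\zeta^{k+1}\rVert^2-\langle(\mathbf W^{k+1})^T\boldsymbol\xi^k,\boldsymbol\zeta^{k+1}\rangle .
\]
\begin{itemize}
\item \emph{The last term.} It is just the cross term you started with, with $\boldsymbol\xi^k$ replaced by $(\mathbf W^{k+1})^T\boldsymbol\xi^k$. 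It is bilinear, has no sign, and carries no small factor. So it is not a ``quadratic error term'', and nothing in your inequality can absorb it.
\item \emph{The middle term.} It enters with the harmful sign. For $\gamma=1/\rho$ it equals $\tfrac1{2\rho}\lVert\boldsymbol\zeta^{k+1}-\hat{\boldsymbol\zeta}^{k+1}\rVert^2$, a consensus error at time $k+1$. This is not dominated by the available $-\tfrac1{2\rho}\lVert\boldsymbol\zeta^{k+1}-\hat{\boldsymbol\zeta}^{k}\rVert^2$: the latter vanishes whenever $\boldsymbol\zeta^{k+1}=\hat{\boldsymbol\zeta}^k$, the former only at consensus.
\item \emph{The parameters.} $\rho<1/(2L)$ forces $\gamma=1/\rho>2L$ to be large, so the parameter choice works against this absorption, not for it.
\end{itemize}
Structurally, a quadratic potential in $\boldsymbol\xi$ reproduces $\langle\boldsymbol\xi^k,\boldsymbol\zeta^{k+1}\rangle$ exactly only in a metric that inverts $I-\mathbf W$ on $\mathcal C_N^\perp$. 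That is a fixed symmetric metric only when $W$ is constant and symmetric, which is precisely what the time-varying digraph setting rules out.

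The paper does not look for a per-iteration potential at all; that is the point of the aggregate lower-bounding framework it adopts.
\begin{itemize}
\item It keeps the coupling term $-\rho\langle\zeta^*-\zeta_i^{k+1},\xi_i^k\rangle$ inside $R_i^k$, together with $\tfrac12\lVert\zeta_i^{k+1}-\hat\zeta_i^k\rVert^2-\tfrac{\rho L}{2}\lVert\zeta_i^{k+1}-\zeta_i^k\rVert^2$.
\item It writes $\sum_{k=0}^{K-1}\sum_i R_i^k=\tfrac12\sum_k\langle\Psi^k,S_k\Psi^k\rangle$ with $\Psi^k=(\tilde{\boldsymbol\zeta}^{k+1},\tilde{\boldsymbol\zeta}^k,\rho\boldsymbol\xi^k)$.
\item It treats $\boldsymbol\xi$ as a state of the linear time-varying system $\Psi^{k+1}=U_{k+1}\Psi^k+P\tilde{\boldsymbol\zeta}^{k+2}$, which encodes the $\xi$-recursion.
\item It lower-bounds the whole trajectory cost by $-\tfrac C2\lVert\Psi^0\rVert^2$, uniformly in $K$, through a finite-horizon control problem and its KKT system, using $W^0=I$ and $\rho\le 1/(2L)$.
\end{itemize}
A trajectory-level bound of this kind on the coupling term is the missing ingredient, or else some other device that genuinely handles non-symmetric, time-varying $W^k$. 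Without it, your argument gives no $K$-independent bound.
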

\begin{IEEEproof}
    See Appendix~\ref{sec:convergence_proof}
\end{IEEEproof}

\begin{remark}
    Let $\bar{\zeta}_i^K = (\bar{u}_i^K, \bar{y}_i^K) = \frac{1}{K} \sum_{k=0}^{K-1} \zeta_i^k$ for $i = 1, \dots, N$.
    Since the dual function $\varphi_i$ is concave, we may use Jensen's inequality and obtain
    \begin{equation*}
        \sum_{i=1}^N \left( \varphi_i(\zeta^*) - \varphi_i(\bar{\zeta}_i^K) \right) \le \frac{C_0}{2\rho K}.
    \end{equation*}
    The statement of Theorem~\ref{thm:convergence} shows that Algorithm~\ref{alg:DCO} makes the dual optimality converge in $O(1/k)$ rate.
\end{remark}

\begin{figure*}[ht]
    \centering
    \begin{subfigure}[b]{0.3\textwidth}
        \centering
        \includegraphics[width=\textwidth]{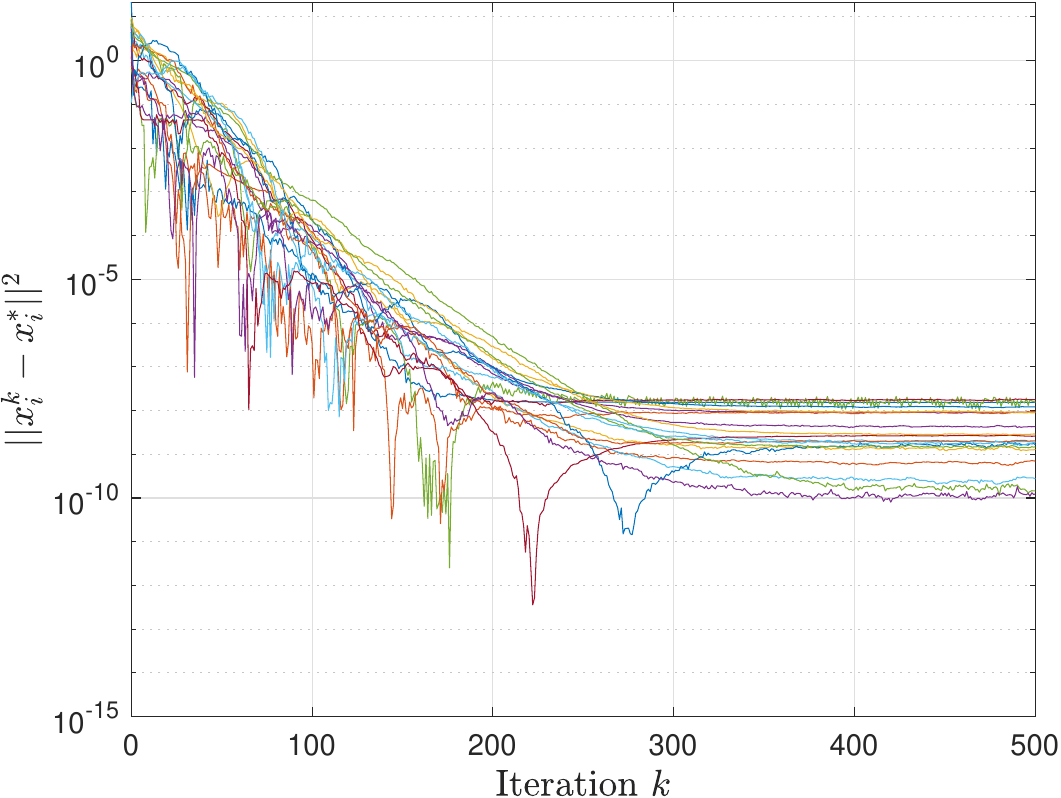}
        \caption{}
        \label{fig:primal}
    \end{subfigure} 
    \hfill
    \begin{subfigure}[b]{0.3\textwidth}
        \centering
        \includegraphics[width=\textwidth]{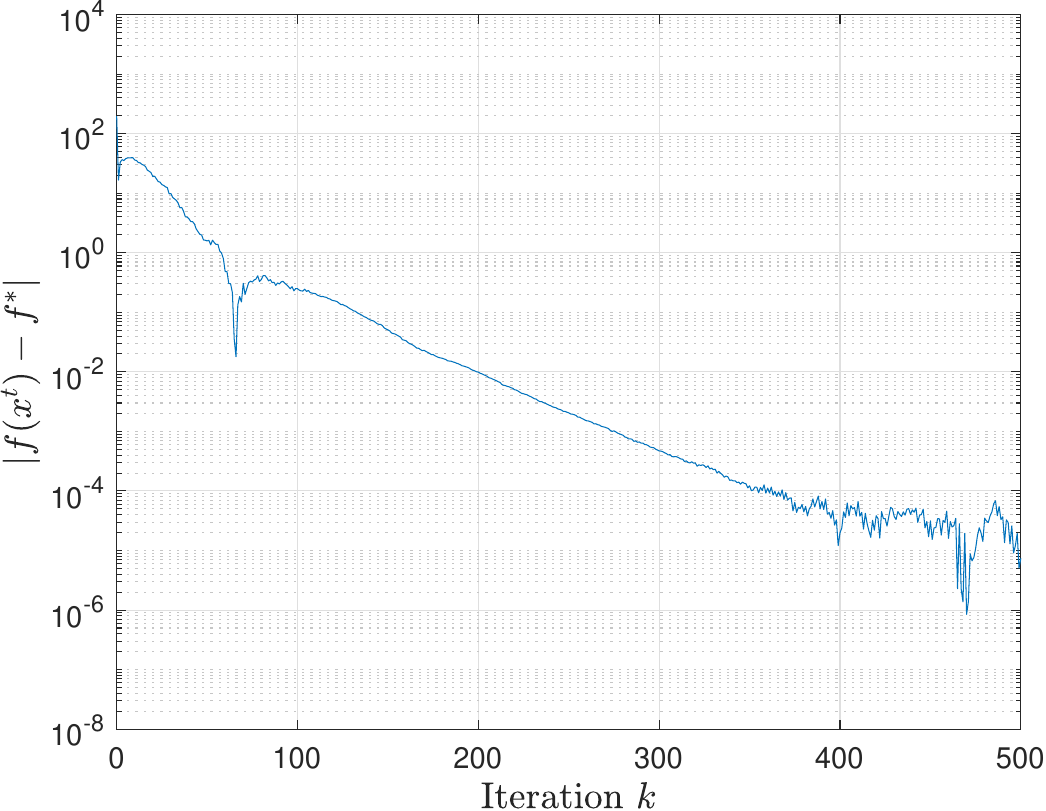}
        \caption{}
        \label{fig:objective}
    \end{subfigure}
    \hfill
    \begin{subfigure}[b]{0.3\textwidth}
        \centering
        \includegraphics[width=\textwidth]{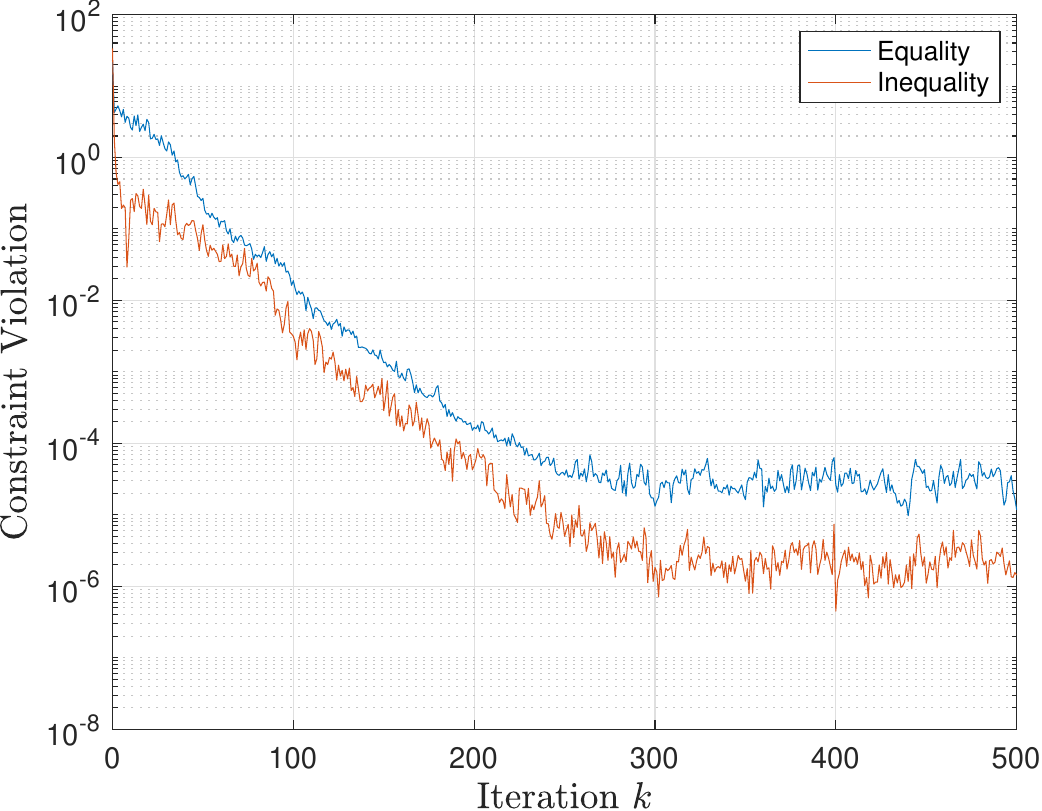}
        \caption{}
        \label{fig:violation}
    \end{subfigure}
    \caption{Convergence performance with 20 agents (\ref{sub@fig:primal}) Primal variables. (\ref{sub@fig:objective}) Objective value. (\ref{sub@fig:violation}) Feasibility gap.}
    \label{fig:simulation}
\end{figure*}

\begin{theorem}
    Let Assumption~\ref{asm:convexity}-\ref{asm:weight_mat}.
    Then, under the conditions in Theorem~\ref{thm:convergence}, $\mathbf{x}^k = (x_1^k, \dots, x_N^k)$ converges to the optimal solution $\mathbf{x}^* = (x_1^*, \dots, x_N^*)$ of the problem~\eqref{eq:original_problem}.
\end{theorem}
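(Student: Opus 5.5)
Our plan is to pass from dual convergence (Theorem~\ref{thm:convergence}) to primal convergence using strong convexity, which makes the primal minimizer a Lipschitz function of the multiplier. Two facts drive this. First, by Assumption~\ref{asm:convexity}, for every $\zeta$ the minimizer $x_i(\zeta)$ of $\mathcal{L}_0^i(\cdot,\zeta)$ is unique. Second, the estimate in the proof of Lemma~\ref{lem:dual_Lipschitz} gives $\norm{x_i(\zeta)-x_i(\zeta')}\le \frac{\sqrt{\norm{A_i}^2+qL_g^2}}{\mu}\norm{\zeta-\zeta'}$. We also use that $x_i^*=x_i(\zeta^*)$; this follows from the local KKT system \eqref{eq:kkt_local} together with the consensus conditions \eqref{eq:kkt_global}.

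\textbf{Step 1: identify $x_i^{k+1}$ as a Lagrangian minimizer.} Write $\xi_i^k$ as in \eqref{eq:dual_end}. We claim $x_i^{k+1}=\argmin_x f_i(x)+\product{u_i^{k+1},A_ix-b_i-v_i^k}+\product{y_i^{k+1},g_i(x)-z_i^k}$. To see this, differentiate the augmented Lagrangian in \eqref{eq:algo_primal}. The multiplier terms it produces are exactly $A_i^Tu_i^{k+1}$ and $\sum_l y_{il}^{k+1}\partial g_{il}$, by \eqref{eq:algo_dual_eq}--\eqref{eq:algo_dual_inq}. This is the same equivalence already used to derive \eqref{eq:dual_update}. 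The shift by $v_i^k,z_i^k$ is a constant and does not change the minimizer, so $x_i^{k+1}=x_i(\zeta_i^{k+1})$. Combining with the Lipschitz bound gives $\norm{x_i^{k+1}-x_i^*}\le \frac{\sqrt{L\mu}}{\mu}\norm{\zeta_i^{k+1}-\zeta^*}$. It therefore suffices to show $\zeta_i^k\to\zeta^*$.

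\textbf{Step 2: summability from Theorem~\ref{thm:convergence}.} The bound holds uniformly in $K$, and we will check that the summands are nonnegative. Hence $\sum_{k\ge0}\sum_i(\varphi_i(\zeta^*)-\varphi_i(\zeta_i^k))<\infty$. Nonnegativity comes from optimality of $\zeta^*$ for $\sum_i\varphi_i$ evaluated at consensus points. It does not hold termwise off consensus, so we need the consensus residual to vanish as well. The telescoping analysis behind Theorem~\ref{thm:convergence} also yields summability of $\norm{\boldsymbol\zeta^{k+1}-\hat{\boldsymbol\zeta}^{k+1}}^2$, equivalently $\norm{\xi^{k+1}-\xi^k}^2$. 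We will extract this from the same Lyapunov inequality. Together with strong connectivity and double stochasticity of $W^k$ (a uniform spectral gap on $\mathcal{C}_N^\perp$), this gives $\zeta_i^k-\frac1N\sum_j\zeta_j^k\to0$.

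\textbf{Step 3: from small dual gap to convergence of multipliers.} This is the main obstacle. The functions $\varphi_i$ are concave and $L$-smooth but not strongly concave, so a vanishing dual gap does not by itself force $\zeta^k\to\zeta^*$. We plan to avoid needing convergence of $\zeta^k$ itself by working with gradients instead. For smooth concave functions, $\varphi_i(\zeta^*)-\varphi_i(\zeta)-\product{\nabla\varphi_i(\zeta^*),\zeta^*-\zeta}\ge\frac{1}{2L}\norm{\nabla\varphi_i(\zeta)-\nabla\varphi_i(\zeta^*)}^2$. We sum this over $i$ at the consensus average $\bar\zeta^k$. The linear term vanishes because $\sum_i\nabla\varphi_i(\zeta^*)=0$; this is primal feasibility through $v_i^*,z_i^*$, up to complementarity for the $y$-part. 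We then control the consensus error using Step 2 and Lipschitzness.

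The result is $\nabla\varphi_i(\zeta_i^k)\to\nabla\varphi_i(\zeta^*)$, i.e.
\[
A_ix_i^k\to A_ix_i^*,\qquad g_i(x_i^k)\to g_i(x_i^*).
\]
To get convergence of $x_i^k$ itself we return to strong convexity of $f_i$. Evaluate $\mathcal{L}_0^i(\cdot,\zeta^*)$ at $x_i^k$: it exceeds its minimum value at $x_i^*$ by at least $\frac{\mu}{2}\norm{x_i^k-x_i^*}^2$. Summing over $i$, the excess equals the primal objective gap plus terms involving the constraint residuals, which we have just shown tend to zero. Bounding the objective gap via weak duality and the summed dual gap gives $\norm{x_i^k-x_i^*}\to0$.

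If boundedness of $\{\zeta_i^k\}$ is needed along the way, we will take it from the Lyapunov function in Theorem~\ref{thm:convergence}, since $C_0$ bounds $\norm{\boldsymbol\zeta^k-\mathbf 1\otimes\zeta^*}$.
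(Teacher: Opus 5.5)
\textbf{Gap in Step 2.} As written, your argument depends on Step 2, and Step 2 rests on a false premise plus an unsupported claim. You assert that $\varphi_i(\zeta^*)-\varphi_i(\zeta_i^k)$ can be negative off consensus. To compensate, you invoke summability of $\norm{\boldsymbol\zeta^{k+1}-\hat{\boldsymbol\zeta}^{k+1}}^2$ together with a uniform spectral gap. Neither is available:
\begin{itemize}
\item The analysis behind Theorem~\ref{thm:convergence} only bounds $\sum_k\sum_i R_i^k$ from below by $-\frac{C}{2}\norm{\Psi^0}^2$. No positive multiple of a consensus residual survives to give summability.
\item Assumption~\ref{asm:weight_mat} puts no uniform lower bound on the nonzero entries of $W^k$. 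So $I-W^k$ need not be invertible on $\mathcal{C}_N^\perp$ uniformly in $k$.
\end{itemize}
Step 3 as written works at the average $\bar\zeta^k$ and controls the consensus error through Step 2, so the argument does not close. Separately, $\sum_i\nabla\varphi_i(\zeta^*)\neq 0$ in the $y$-block: $\sum_i g_i(x_i^*)\le 0$ is only complementary to $y^*$.

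The premise is false, which makes the detour unnecessary. $\varphi_i$ is the dual function of the $i$th subproblem at the optimal allocation $(v_i^*,z_i^*)$. Hence for every $\zeta=(u,y)$ with $y\ge 0$,
\[
\varphi_i(\zeta)\le\mathcal{L}_0^i(x_i^*,\zeta)=f_i(x_i^*)+\product{y,g_i(x_i^*)-z_i^*}\le f_i(x_i^*)=\varphi_i(\zeta^*)
\]
by \eqref{eq:kkt_local}. This is exactly the paper's $\Phi_i^k\ge 0$, $T_i^k\ge 0$. Every summand in Theorem~\ref{thm:convergence} is therefore nonnegative for $k\ge 1$, agent by agent, and no consensus is needed.

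\textbf{Repair of your route.} Once this is fixed, your route can be run per agent at $\zeta_i^k$ and it works:
\begin{itemize}
\item Cocoercivity gives $\frac{1}{2L}\norm{\nabla\varphi_i(\zeta_i^k)-\nabla\varphi_i(\zeta^*)}^2\le\Phi_i^k\to 0$.
\item The excess $\mathcal{L}_0^i(x_i^k,\zeta^*)-\mathcal{L}_0^i(x_i^*,\zeta^*)$ equals $-\Phi_i^k-\product{\nabla\varphi_i(\zeta_i^k)-\nabla\varphi_i(\zeta^*),\zeta_i^k-\zeta^*}$.
\item Boundedness of $\zeta_i^k$ follows from keeping the term $\frac{1-\rho L}{2}\norm{\tilde{\boldsymbol\zeta}^K}^2$ in the $P$-bound.
\end{itemize}

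\textbf{Comparison with the paper.} This is a genuinely different route. The paper takes a limit point $\zeta'$ and uses the saddle sandwich $\mathcal{L}_0^i(x_i',\zeta_i')\le\mathcal{L}_0^i(x_i^*,\zeta_i')\le\mathcal{L}_0^i(x_i^*,\zeta^*)$, whose sums are equal. It then concludes $x_i'=x_i^*$ by uniqueness of the minimizer under strong convexity.

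Your gradient-based version is more quantitative but heavier. It needs cocoercivity from Lemma~\ref{lem:dual_Lipschitz} and boundedness of $\zeta_i^k$.

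You can get the best of both by doing the paper's sandwich directly at $\zeta_i^k$, using strong convexity of $\mathcal{L}_0^i(\cdot,\zeta_i^k)$ about its minimizer $x_i^k$ (your Step 1). This gives in one line
\[
\tfrac{\mu}{2}\norm{x_i^k-x_i^*}^2\le\mathcal{L}_0^i(x_i^*,\zeta_i^k)-\varphi_i(\zeta_i^k)\le\varphi_i(\zeta^*)-\varphi_i(\zeta_i^k),\qquad k\ge 1.
\]
Theorem~\ref{thm:convergence} then bounds $\sum_{k\ge 1}\sum_i\norm{x_i^k-x_i^*}^2$ by a constant independent of $K$. This needs no limit points, boundedness, cocoercivity, or consensus.
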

\begin{IEEEproof}
    Since, from \eqref{eq:algo_primal} and strongly convexity of $\mathcal{L}_{\rho}^i$ with respect to $x_i$, $x_i^{k+1}$ uniquely minimizes $\mathcal{L}_\rho^i (x_i, p_i^k, q_i^k, v_i^k, z_i^k)$ with respect to $x_i$, we have that
    \begin{equation*}
        \begin{aligned}
            0 &\in \partial f_i(x_i^{k+1}) + A_i^T (p_i^k + \rho(A_i x_i^{k+1} - b_i - v_i^k)) \\
            &\quad + \sum_{l=1}^q [q_{il}^k + \rho (g_{il}(x_i^{k+1}) - z_{il}^k)]_+ \partial g_{il}(x_i^{k+1}) \\
            &= \partial f_i(x_i^{k+1}) + A_i^T u_i^{k+1} + \sum_{l=1}^q y_{il}^{k+1} \partial g_{il}(x_i^{k+1})
        \end{aligned}
    \end{equation*}
    where the update rules \eqref{eq:algo_dual_eq}-\eqref{eq:algo_dual_inq} are used.
    It is equivalent to
    \begin{equation}\label{eq:pf_x_i_optimality}
        x_i^{k+1} = \argmin \mathcal{L}_0^i(x, u_i^{k+1}, y_i^{k+1})
    \end{equation}
    and $\varphi_i(\zeta_i^{k+1}) = \mathcal{L}_0^i(x_i^{k+1}, u_i^{k+1}, y_i^{k+1})$.

    The result of Theorem~\ref{thm:convergence} implies that 
    \begin{equation*}
        \lim_{k \rightarrow \infty} \sum_i \varphi_i(\zeta_i^k) = \sum_i \varphi_i(\zeta^*),
    \end{equation*}
    so that there is a limit point $\boldsymbol{\zeta}' = (\zeta_1',\dots,\zeta_N')$ such that $\sum_i \varphi_i(\zeta_i') = \sum_i \varphi_i(\zeta^*)$.
    Let $x_i' = \arg\min_{x_i} \mathcal{L}_0^i(x_i, u_i', y_i')$ where $\zeta_i' = (u_i', y_i')$ for $i = 1,\dots,N$.
    Then, we have
    \begin{equation}\label{eq:pf_limit}
        \begin{aligned}
            \lim_{k \rightarrow \infty} \sum_i \mathcal{L}_0^i(x_i^k, u_i^k, y_i^k) 
            &= \sum_i \mathcal{L}_0^i(x_i', u_i', y_i') \\
            &= \sum_i \mathcal{L}_0^i(x_i^*, u_i^*, y_i^*).
        \end{aligned}
    \end{equation}
    From the optimality in \eqref{eq:pf_x_i_optimality}, we obtain
    \begin{equation*}
        \mathcal{L}_0^i(x_i', u_i', y_i') \le \mathcal{L}_0^i(x_i^*, u_i', y_i'),
    \end{equation*}
    and, from the saddle point theorem~\cite[Proposition 6.1.6]{bertsekas2016nonlinear}, we have
    \begin{equation*}
        \mathcal{L}_0^i(x_i^*, u_i', y_i') \le \mathcal{L}_0^i(x_i^*, u_i^*, y_i^*).
    \end{equation*}
    Then, from \eqref{eq:pf_limit}, we conclude $\sum_i \mathcal{L}_0^i(x_i', u_i', y_i') = \sum_i \mathcal{L}_0^i(x_i^*, u_i', y_i')$.
    Finally, a limit point $x_i'$ of $\{x_i^k\}_{k \in \mathbb{Z}_+}$ should be equal to $x_i^*$ for $i = 1,\dots,N$, due to strong convexity.
\end{IEEEproof}

\section{Numerical Example}\label{sec:simulation}
In this section, we demonstrate a numerical study of the proposed algorithm.
We consider a network of $N = 20$ agents where the objective is to solve the following optimization problem with randomly selected parameters $d_i \in \{1,\dots,5\}$ for $i \in \mathcal{V}$, $p = 25$, and $q = 1$: 
\begin{equation*}
    \begin{aligned}
        &\min_{\{x_i\}_{i \in \mathcal{V}}} &&\sum_{i \in \mathcal{V}} \frac{1}{2} x_i^T Q_i x_i + r_i^T x_i + \norm{x_i}_1 \\
        &\text{subject to} && \sum_i A_i x_i = \sum_i b_i \\
        &&& \sum_i \norm{x_i - a_i}^2 \le \sum_i c_i^2.
    \end{aligned}
\end{equation*}
The coefficient $Q_i \in \mathbb{R}^{d_i \times d_i}$ is random symmetric positive definite, and $r_i \in \mathbb{R}^{d_i}$ is a random vector.
The constraints are randomly generated under conditions where the feasibility set is not an empty set.
The initial value of $\mathbf{x}^0$ is randomly selected, and the initial value of auxiliary variables and multipliers are initialized to zero.

In \figurename~\ref{fig:simulation}, we plot the evolution of (\ref{sub@fig:primal}) the error of the primal variables, (\ref{sub@fig:objective}) the objective function value, and (\ref{sub@fig:violation}) the constraint violation over iterations.
The error of the primal variable and objective function value are defined as $\norm{x_i - x_i^*}, \forall i \in \mathcal{V}$ and $|f(\mathbf{x}^k) - f^*|$, respectively.
The constraint violations are defined as $\max_{i \in \mathcal{V}}\norm{A_i x_i - b_i - v_i}$ and $\max \{ \norm{x_i - a_i}^2 - c_i^2 - z_i, 0 \}$.
\figurename~\ref{fig:simulation} indicates that the proposed algorithm has remarkable convergence property with respect to both of optimality and feasibility on time-varying directed network.

\section{Conclusion}\label{sec:conclusion}
We developed a distributed algorithm designed to solve nonsmooth convex optimization problems that contain both network-wise equality and inequality constraints.
The algorithm is constructed by introducing auxiliary variables for decomposition and utilizing the primal-dual method.
Under the conditions of the strong convexity in the local objective functions and bounded subdifferential of the inequality constraints, we prove that the proposed algorithm achieves an $O(1/k)$ rate of convergence in dual aspect on time-varying directed graphs.
Furthermore, simulations were conducted to verify the algorithm's advantages.
In future work, we plan to extend the proposed algorithm to asynchronous time-varying networks and investigate the way to weaken the doubly stochastic assumption.

\appendices
\section{Preliminaries for Proofs}
\subsection{Monotone Operator and Subdifferential}
We use $\mathcal{H}$ to denote the Euclidean space, and $\Gamma_0(\mathcal{H})$ the class of proper lower semicontinuous convex function from $\mathcal{H}$ to $(-\infty, +\infty]$.
An operator $M$ on a Euclidean space $\mathcal{H}$ is a set-valued mapping, i.e., $M: \mathcal{H} \rightarrow 2^\mathcal{H}$.
The graph of $M$ is defined as $\gra M = \{(x, y) \in \mathcal{H} \times \mathcal{H} \mid y \in Mx\}$.
The operator $M$ is called monotone if 
\begin{equation*}
    \langle x - x', y - y' \rangle \ge 0,
\end{equation*}
and $\mu$-strongly monotone if
\begin{equation}\label{eq:prel_strong_monotone}
    \langle x - x', y - y' \rangle \ge \mu \lVert x - x' \rVert^2,
\end{equation}
for $(x, y), (x', y') \in \gra M$.
A monotone operator is said to be maximal if there is no monotone operator $T'$ such that $\gra T \subset \gra T'$.
The operator $M$ is called cyclically monotone \cite{bauschke2011convex} if for every integer $n \ge 2$,
\begin{equation*}
    \sum_{i = 1}^n \langle x_{i+1} - x_i, u_i \rangle \le 0,
\end{equation*}
where $(x_i, u_i) \in \gra M, i = 1, \dots, n$ and $x_{n+1} = x_1$.
The subdifferential $\partial f: \mathcal{H} \rightarrow 2^\mathcal{H}$ of $f \in \Gamma_0(\mathcal{H})$ is the set-valued operator defined as
\begin{equation*}
    \partial f(x) = \{p \in \mathcal{H} \mid f(y) \ge f(x) + \langle p, y - x \rangle, \forall y \in \mathcal{H}\}.
\end{equation*}
The subdifferential $\partial f$ of $f \in \Gamma_0(\mathcal{H})$ is maximal monotone and cyclical monotone \cite[Propostion 20.25, 22.14]{bauschke2011convex}.

\subsection{Convex Analysis}
A differentiable function $f: \mathcal{H} \rightarrow \mathbb{R}$ is said to be $L$-smooth if it has Lipschitz continuous gradient with coefficient $L > 0$, i.e.,
\begin{equation}\label{eq:prel_Lipschitz_gradient}
    \lVert \nabla f(x) - \nabla f(x') \rVert \le L \lVert x - x' \rVert,
\end{equation}
for $\forall x, x' \in \mathcal{H}$.
For an $L$-smooth function $f$, the followings hold \cite{zhou2018fenchel}:
\begin{align}
    \langle \nabla f(x) - \nabla f(x'), x - x' \rangle \le L \lVert x - x' \rVert^2, \label{eq:prel_smooth_ineq_1}\\
    f(x') \le f(x) + \langle \nabla f(x), x' - x \rangle + \frac{L}{2} \lVert x' - x \rVert^2, \label{eq:prel_smooth_ineq_2}
\end{align}
for $\forall x, x' \in \mathcal{H}$.
A function $g: \mathcal{H} \rightarrow \mathbb{R}$ is said to be convex if for $x, y \in \mathcal{H}$
\begin{equation}\label{eq:prel_convex}
    g(y) \ge g(x) + \langle s, y - x \rangle
\end{equation}
and $\mu$-strongly convex if
\begin{equation*}
    g(y) \ge g(x) + \langle s, y - x \rangle + \frac{\mu}{2}\lVert y - x \rVert^2
\end{equation*}
where $s \in \partial g(x)$.
Since the subdifferential $\partial g$ of a $\mu$-strongly convex function $g$ is $\mu$-strongly monotone \cite{bauschke2011convex}, from \eqref{eq:prel_strong_monotone}, we have
\begin{equation*}
    \mu \lVert x - x' \rVert \le \lVert s - s' \rVert,
\end{equation*}
where $s \in \partial g(x)$ and $s' \in \partial g(x')$.
Let $x^*$ be a minimizer of a convex function $f: \mathcal{H} \rightarrow \mathbb{R}$ over a convex set $X \subset \mathcal{H}$.
Then, there exists a subgradient $d \in \partial f(x^*)$ satisfying
\begin{equation} \label{eq:prel_optimal_cond}
    \langle x - x^*, d \rangle \ge 0,\quad \forall x \in X.
\end{equation}

\subsection{Convergence Analysis Methodology}
We summarize a methodology for convergence analysis called aggregate lower-bounding (ALB) proposed in \cite{shahriari2025double}.
In the convergence analysis based on Lyapunov method, one seeks a Lyapunov function satisfying
\begin{equation*}
    L(x^{k+1}) - L(x^k) \le - V(x^k)
\end{equation*}
where $V$ is a positive definite function.

As the assumptions become harsher and the algorithm becomes more complex, it becomes more difficult to find an appropriate Lyapunov function that satisfies the above conditions.
In ALB, the Lyapunov function is replaced by a negative function $D_k$ as
\begin{equation*}
    V(x^k) + D_k \le 0.
\end{equation*}
This approach sums over all iteration:
\begin{equation}\label{eq:prel_aggregate}
    \sum_{k = 0}^{K-1} V(x^k) + \bar{D}_K \le 0
\end{equation}
where $\bar{D}_K = \sum_{k=0}^K D_k$, and show that there exists a lower bound $C$ for the aggregate term $\bar{D}_K$ regardless of $K$.
Then, we obtain
\begin{equation*}
    \sum_{k = 0}^{K-1} V(x^k) \le - C,
\end{equation*}
and if $V$ is convex, we can conclude $V(\bar{x}^K) \le - C/K$ where $\bar{x}^K = \frac{1}{K}\sum_{k = 0}^{K-1} x^k$, yielding the standard $O(1/K)$ rate.

\section{Proof of Theorem~\ref{thm:convergence}}\label{sec:convergence_proof}
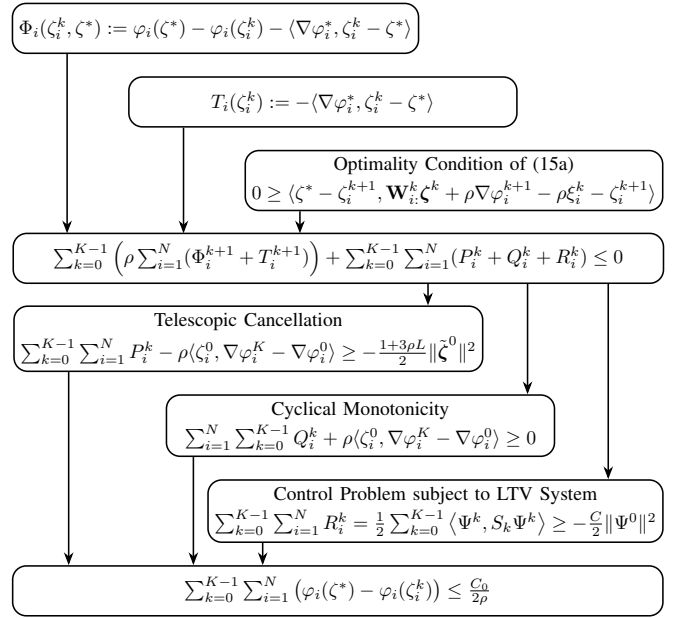
\begin{figure}
    \centering
    \resizebox{\columnwidth}{!}{
    \begin{tikzpicture}[node distance=.4cm and 2.1cm,>=Stealth]
    \node[block] (Phi) {$\Phi_i(\zeta_i^k, \zeta^*) := \varphi_i(\zeta^*) - \varphi_i(\zeta_i^k) - \langle \nabla\varphi_i^*, \zeta_i^k - \zeta^* \rangle$};
    \node[block, below right=of Phi.south west] (T) {$T_i(\zeta_i^k) := -\langle \nabla \varphi_i^*, \zeta_i^k - \zeta^* \rangle$};
    \node[block, below right=of T.south west] (Proj) {\shortstack{Optimality Condition of \eqref{eq:dual_update} \\ $0 \ge \langle \zeta^* - \zeta_i^{k+1}, \mathbf{W}_{i:}^k \boldsymbol{\zeta}^k + \rho \nabla\varphi_i^{k+1} - \rho \xi_i^k - \zeta_i^{k+1} \rangle$}};

    \path let
        \p1=(Phi.west),
        \p2=(Proj.east)
    in node[block, below=of Proj.south east, anchor=north east, minimum width=\x2-\x1-\pgflinewidth, outer sep=0pt] (S) {$\sum_{k = 0}^{K-1} \left(\rho \sum_{i = 1}^N (\Phi_i^{k+1} + T_i^{k+1})\right) + \sum_{k = 0}^{K-1} \sum_{i = 1}^N (P_i^k + Q_i^k + R_i^k) \le 0$};

    \node[block, below=of S.south west, anchor=north west] (P) {\shortstack{Telescopic Cancellation \\ $\sum_{k=0}^{K-1} \sum_{i = 1}^N P_i^k - \rho \langle \zeta_i^0, \nabla\varphi_i^K - \nabla\varphi_i^0 \rangle \ge - \frac{1+3\rho L}{2}\lVert \tilde{\boldsymbol{\zeta}}^0 \rVert^2$}};
    \node[block, below right=of P.south west, xshift=.65cm] (Q) {\shortstack{Cyclical Monotonicity \\ $\sum_{i = 1}^N \sum_{k = 0}^{K-1} Q_i^k + \rho \langle \zeta_i^0, \nabla\varphi_i^K - \nabla\varphi_i^0 \rangle \ge 0$}};
    \node[block, below right=of Q.south west, xshift=-1.35cm] (R) {\shortstack{Control Problem subject to LTV System \\ $\sum_{k = 0}^{K-1}\sum_{i = 1}^N R_i^k = \frac{1}{2}\sum_{k = 0}^{K-1}\left\langle \Psi^k, S_k \Psi^k \right\rangle \ge - \frac{C}{2} \lVert \Psi^0 \rVert^2$}};
    \path let
        \p1=(Phi.west),
        \p2=(Proj.east)
    in node[block, below=of R.south east, anchor=north east, minimum width=\x2-\x1-\pgflinewidth] (All) {$\sum_{k = 0}^{K-1} \sum_{i=1}^N \left( \varphi_i(\zeta^*) - \varphi_i(\zeta_i^k) \right) \le \frac{C_0}{2\rho}$};

    \path[line] ([xshift=1cm]Phi.south west) -- ([xshift=1cm]Phi.south west |- S.north);
    \path[line] ([xshift=1cm]T.south west) -- ([xshift=1cm]T.south west |- S.north);
    \path[line] ([xshift=1cm]Proj.south west) -- ([xshift=1cm]Proj.south west |- S.north);

    \path[line] ([xshift=-1cm]S.south -| P.north east) -- ([xshift=-1cm]P.north east);
    \path[line] ([xshift=-.5cm]S.south -| Q.north east) -- ([xshift=-.5cm]Q.north east);
    \path[line] ([xshift=-1cm]S.south -| R.north east) -- ([xshift=-1cm]R.north east);

    \path[line] ([xshift=1cm]P.south west) -- ([xshift=1cm]P.south west |- All.north);
    \path[line] ([xshift=.5cm]Q.south west) -- ([xshift=.5cm]Q.south west |- All.north);
    \path[line] ([xshift=1cm]R.south west) -- ([xshift=1cm]R.south west |- All.north);
\end{tikzpicture}
}

    \caption{Proof Diagram of Theorem~\ref{thm:convergence}}
    \label{fig:pf_diagram}
\end{figure}
The following proof is based on the procedure in convergence analysis in \cite[Section III-C]{shahriari2025double}.
In contrast to \cite{shahriari2025double}, our algorithm is on the time-varying networks, so the method for finding the lower bound of the aggregate term is different.
Figure~\ref{fig:pf_diagram} presents the flow of this section.
Based on the definitions of the two functions $\Phi_i^k$, $T_i^k$ and the optimal condition of \eqref{eq:dual_update}, we derive an equation of the form~\eqref{eq:prel_aggregate} then find the lower bounds for three aggregate terms $P_i^k$, $Q_i^k$, and $R_i^k$ through the telescopic cancellation, cyclic monotonicity, and optimization of control problems, respectively.

\subsection{Deriving the aggregate term}
We firstly define $\Phi_i(\zeta_1, \zeta_2) := - \varphi_i(\zeta_1) + \varphi_i(\zeta_2) + \left\langle \nabla \varphi_i(\zeta_2), \zeta_1 - \zeta_2 \right\rangle$, $\Phi_i^k := \Phi_i(\zeta_i^k, \zeta^*)$, $\nabla \varphi_i^* := \nabla \varphi_i(\zeta^*)$, and $\nabla \varphi_i^k := \nabla \varphi_i(\zeta^k)$.
Note that since the dual $\varphi_i$ is concave, from the convexity~\eqref{eq:prel_convex} of $-\varphi_i$, we have $\Phi_i^k \ge 0$ and
\begin{equation}\label{eq:pf_bregman}
    \begin{aligned}
        - \Phi_i(\zeta^*, \zeta_i^k) 
        &= \varphi_i(\zeta^*) - \varphi_i(\zeta_i^k) - \left\langle \nabla\varphi_i^k, \zeta^* - \zeta_i^k \right\rangle \\
        &= \Phi_i^k + \left\langle \nabla\varphi_i^k - \nabla\varphi_i^*, \zeta_i^k - \zeta^* \right\rangle
        \le 0.
    \end{aligned}
\end{equation}
From Lemma~\ref{lem:dual_Lipschitz} and \eqref{eq:prel_smooth_ineq_2}, $\varphi_i$ is $L$-smooth and 
\begin{equation}\label{eq:pf_Lipschitz}
    \begin{aligned}
        \frac{L}{2} \lVert \zeta_i^{k+1} - \zeta_i^k \rVert^2 
        &\ge \varphi_i(\zeta_i^k) - \varphi_i(\zeta_i^{k+1}) + \left\langle \nabla\varphi_i^k, \zeta_i^{k+1} - \zeta_i^k \right\rangle \\
        &= \Phi_i^{k+1} - \Phi_i^k + \left\langle \nabla\varphi_i^k - \nabla\varphi_i^*, \zeta_i^{k+1} - \zeta_i^k \right\rangle.
    \end{aligned}
\end{equation}
Adding~\eqref{eq:pf_bregman} and \eqref{eq:pf_Lipschitz} yields
\begin{equation} \label{eq:pf_from_bregman}
    \Phi_i^{k+1} + \left\langle \nabla\varphi_i^k - \nabla\varphi_i^*, \zeta_i^{k+1} - \zeta^* \right\rangle - \frac{L}{2} \lVert \zeta_i^{k+1} - \zeta_i^k \rVert^2 \le 0.
\end{equation}

Second, we define 
\begin{equation*}
    T_i(\zeta) := - \left\langle \nabla\varphi_i^*, \zeta - \zeta^* \right\rangle,
\end{equation*}
and $T_i^k := T_i(\zeta_i^k)$. 
Note that $\nabla\varphi_i^* = (A_i x_i^* - b_i - v_i^*, g_i(x_i^*) - z_i^*) = (0, g_i(x_i^*) - z_i^*)$, from the KKT condition~\eqref{eq:kkt_local}, $\langle y_i^*, g_i(x_i^*) - z_i^* \rangle = 0$ and $g_i(x_i^*) \le z_i^*$.
Since the dual variable $\zeta_i^k$ is always in $\mathbb{R}^p \times \mathbb{R}_+^q$ for $i \in \mathcal{V}$ and $k \in \mathbb{Z}_+$, i.e., $y_i^k \ge 0$, we have $T_i^k = - \langle g_i(x_i^*) - z_i^*, y_i^k \rangle \ge 0$.

Based on the optimality condition~\eqref{eq:prel_optimal_cond}, the equation~\eqref{eq:dual_update} implies
\begin{equation}\label{eq:pf_proj}
    \begin{aligned}
        0 &\ge
        \langle \zeta^* - \zeta_i^{k+1}, \mathbf{W}_{i:}^k \boldsymbol{\zeta}^k + \rho \nabla\varphi_i^{k+1} - \rho \xi_i^k - \zeta_i^{k+1} \rangle \\
        &= \frac{1}{2} \lVert \zeta_i^{k+1} - \zeta^* \rVert^2 - \frac{1}{2} \lVert \mathbf{W}_{i:}^k \boldsymbol{\zeta}^k - \zeta^* \rVert^2 \\
        &\quad + \frac{1}{2} \lVert \zeta_i^{k+1} - \mathbf{W}_{i:}^k \boldsymbol{\zeta}^k \rVert^2 + \rho \langle \zeta^* - \zeta_i^{k+1}, \nabla\varphi_i^{k+1} - \xi_i^k \rangle
    \end{aligned}
\end{equation}
using $2\langle a, a-b \rangle = \lVert a \rVert^2 - \lVert b \rVert^2 + \lVert a - b \rVert^2$.
Adding $T_i^{k+1} + \langle \nabla \varphi^*, \zeta_i^{k+1} - \zeta^* \rangle = 0$ and \eqref{eq:pf_proj}, we have
\begin{equation} \label{eq:pf_normal}
    \begin{aligned}
        \rho T_i^{k+1} + \rho \left\langle \zeta^* - \zeta_i^{k+1}, \nabla\varphi_i^{k+1} - \nabla\varphi_i^* - \xi_i^k \right\rangle \\
        + \frac{1}{2} \lVert \zeta_i^{k+1} - \zeta^* \rVert^2 - \frac{1}{2} \lVert \mathbf{W}_{i:}^k \boldsymbol{\zeta}^k - \zeta^* \rVert^2 \\
        + \frac{1}{2} \lVert \zeta_i^{k+1} - \mathbf{W}_{i:}^k \boldsymbol{\zeta}^k \rVert^2
        \le 0.
    \end{aligned}
\end{equation}
Multiplying \eqref{eq:pf_from_bregman} by $\rho$ and adding \eqref{eq:pf_normal}, 
we have
\begin{equation*}
    \rho(\Phi_i^{k+1} + T_i^{k+1}) + P_i^k + Q_i^k + R_i^k \le 0
\end{equation*}
where 
\begin{equation*}
    \begin{aligned}
        P_i^k &= \frac{1}{2}\lVert\zeta_i^{k+1} - \zeta^* \rVert^2 - \frac{1}{2}\lVert \mathbf{W}_{i:}^k \boldsymbol{\zeta}^k  - \zeta^* \rVert^2 \\
        &\qquad + \rho \left\langle \zeta^*, \nabla\varphi_i^{k+1} - \nabla\varphi_i^k \right\rangle, \\
        Q_i^k &= - \rho\left\langle \zeta_i^{k+1}, \nabla\varphi_i^{k+1} - \nabla\varphi_i^k \right\rangle, \\
        R_i^k &= \frac{1}{2}\lVert \zeta_i^{k+1} - \mathbf{W}_{i:}^k \boldsymbol{\zeta}^k \rVert^2 - \frac{\rho L}{2}\lVert \zeta_i^{k+1} - \zeta_i^k \rVert^2 \\
        &\qquad - \rho \left\langle \zeta^* - \zeta_i^{k+1}, \xi_i^k \right\rangle.
    \end{aligned}
\end{equation*}
Summing over $i = 1, \dots, N$ and $k = 0, \dots, K-1$, we have
\begin{equation*}
    \sum_{k = 0}^{K-1} \left(\rho \sum_{i = 1}^N (\Phi_i^{k+1} + T_i^{k+1})\right) + \sum_{k = 0}^{K-1} \sum_{i = 1}^N (P_i^k + Q_i^k + R_i^k) \le 0.
\end{equation*}
The first summation $\sum_{k = 0}^{K-1} \left(\rho \sum_{i = 1}^N (\Phi_i^{k+1} + T_i^{k+1})\right)$ is equivalent to $\sum V_k$, and the term consisting of $P_i^k$, $Q_i^k$ and $R_i^k$ is equivalent to $\bar{D}_K$ in \eqref{eq:prel_aggregate}.
We now show that each terms in the last summation have a lower bound.

\subsection{Bounding the aggregate term}
\subsubsection{\protect First term $P_i^k$}
From the nonexpansive property of doubly stochastic matrices, we have
\begin{equation*}
    \lVert \mathbf{W}^k \boldsymbol{\zeta}^k - 1\otimes\zeta^* \rVert \le \lVert \boldsymbol{\zeta}^k - 1\otimes\zeta^* \rVert.
\end{equation*}
Then, we obtain 
\begin{equation*}
    \begin{aligned}
        \sum_{k = 0}^{K-1} \sum_{i = 1}^N P_i^k 
        &\ge \frac{1}{2}\lVert \boldsymbol{\zeta}^K - 1\otimes\zeta^* \rVert^2 - \frac{1}{2}\lVert \boldsymbol{\zeta}^0 - 1\otimes\zeta^* \rVert^2 \\
        &\quad + \rho \sum_{i = 1}^N \left\langle \zeta^*, \nabla\varphi_i^K - \nabla\varphi_i^0 \right\rangle.
    \end{aligned}
\end{equation*}
Subtracting $\rho \sum_i \langle \zeta_i^0, \nabla\varphi_i^K - \nabla\varphi_i^0 \rangle$, we have
\begin{equation*}
    \begin{aligned}
        &\sum_{i = 1}^N \left( \sum_{k=0}^{K-1}  P_i^k - \rho \langle \zeta_i^0, \nabla\varphi_i^K - \nabla\varphi_i^0 \rangle \right) \\
        &\ge \frac{1}{2}\lVert \boldsymbol{\zeta}^K - 1\otimes\zeta^* \rVert^2 - \frac{1}{2}\lVert \boldsymbol{\zeta}^0 - 1\otimes\zeta^* \rVert^2 \\
        &\; + \rho \sum_{i = 1}^N \left( \langle \zeta^* - \zeta_i^0, \nabla\varphi_i^* - \nabla\varphi_i^0 \rangle + \langle \zeta^* - \zeta_i^0, \nabla\varphi_i^K - \nabla\varphi_i^* \rangle \right) 
    \end{aligned}
\end{equation*}
Since, from \eqref{eq:prel_smooth_ineq_1},
\begin{equation*}
    \langle \zeta^* - \zeta_i^0, \nabla\varphi_i^* - \nabla\varphi_i^0 \rangle \ge - L \lVert \zeta^* - \zeta_i^0 \rVert^2,
\end{equation*}
and, based on Cauchy–Schwarz inequality and \eqref{eq:prel_Lipschitz_gradient},
\begin{equation*}
    \begin{aligned}
        \langle \zeta^* - \zeta_i^0, \nabla\varphi_i^K - \nabla\varphi_i^* \rangle 
        &\ge - \lVert \zeta^* - \zeta_i^0 \rVert \lVert \nabla\varphi_i^K - \nabla\varphi_i^* \rVert \\
        &\ge - L \lVert \zeta^* - \zeta_i^0 \rVert \lVert \zeta_i^K - \zeta^* \rVert,
    \end{aligned}
\end{equation*}
we have 
\begin{equation*}
    \begin{aligned}
        &\sum_{i = 1}^N \left( \sum_{k=0}^{K-1}  P_i^k - \rho \langle \zeta_i^0, \nabla\varphi_i^K - \nabla\varphi_i^0 \rangle \right) \\
        &\ge \frac{1}{2}\lVert \boldsymbol{\zeta}^K - 1\otimes\zeta^* \rVert^2 - \frac{1 + 2\rho L}{2}\lVert \boldsymbol{\zeta}^0 - 1\otimes\zeta^* \rVert^2 \\
        &\quad - \rho L \lVert \boldsymbol{\zeta}^K - 1\otimes\zeta^* \rVert \lVert \boldsymbol{\zeta}^0 - 1\otimes\zeta^* \rVert \\
        &= \frac{1 - \rho L}{2}\lVert \boldsymbol{\zeta}^K - 1\otimes\zeta^* \rVert^2 - \frac{1 + 3\rho L}{2}\lVert \boldsymbol{\zeta}^0 - 1\otimes\zeta^* \rVert^2  \\
        &\quad + \frac{\rho L}{2}\left(\lVert \boldsymbol{\zeta}^K - 1\otimes\zeta^* \rVert - \lVert \boldsymbol{\zeta}^0 - 1\otimes\zeta^* \rVert\right)^2 \\
        &\ge \frac{1 - \rho L}{2}\lVert \boldsymbol{\zeta}^K - 1\otimes\zeta^* \rVert^2 - \frac{1 + 3\rho L}{2}\lVert \boldsymbol{\zeta}^0 - 1\otimes\zeta^* \rVert^2.
    \end{aligned}
\end{equation*}
Then, the summation of the first term $P_i^k$ has a lower bound as
\begin{equation*}
    \sum_{i = 1}^N \left( \sum_{k=0}^{K-1}  P_i^k - \rho \langle \zeta_i^0, \nabla\varphi_i^K - \nabla\varphi_i^0 \rangle \right)
    \ge - \frac{1 + 3\rho L}{2}\lVert \boldsymbol{\zeta}^0 - 1\otimes\zeta^* \rVert^2.
\end{equation*}

\subsubsection{Second term $Q_i^k$}
From the cyclical monotonicity of $- \nabla \varphi_i$, we have
\begin{equation*}
    \begin{aligned}
        &\sum_{k = 0}^{K-1} Q_i^k + \rho \langle \zeta_i^0, \nabla\varphi_i^K - \nabla\varphi_i^0 \rangle \\
        &= \rho \sum_{k = 0}^{K-1} \langle \zeta_i^{k+1}, \nabla\varphi_i^k - \nabla\varphi_i^{k+1}\rangle + \rho \langle \zeta_i^0, \nabla\varphi_i^K - \nabla\varphi_i^0 \rangle \\
        &= \rho \langle \zeta_i^0 - \zeta_i^K, \nabla\varphi_i^K \rangle + \rho \sum_{k = 0}^{K-1} \langle \zeta_i^{k+1} - \zeta_i^k, \nabla\varphi_i^k \rangle \ge 0.
    \end{aligned}
\end{equation*}

\subsubsection{Third term $R_i^k$}
The third term can be written as
\begin{equation*}
    \sum_{k = 0}^{K-1}\sum_{i = 1}^N R_i^k = \frac{1}{2}\sum_{k = 0}^{K-1}\left\langle \Psi^k, S_k \Psi^k \right\rangle
\end{equation*}
where $\Psi^k = (\tilde{\boldsymbol{\zeta}}^{k+1}, \tilde{\boldsymbol{\zeta}}^k, \rho\boldsymbol{\xi}^k)$, $\tilde{\boldsymbol{\zeta}}^k = \boldsymbol{\zeta}^k - 1_N \otimes \zeta^*$ and 
\begin{equation*}
    S_k = 
    \begin{bmatrix}
        (1 - \rho L) I_N & 2 (\rho L I - W_k) & 2 I_N \\
        0 & {W^k}^T W^k - \rho L I_N & 0 \\
        0 & 0 & 0
    \end{bmatrix} \otimes I_{p+q}.
\end{equation*}

We show that the third term has a lower bound using a control problem with a linear time-varying difference system.
Consider the following control problem with $C > 0$
\begin{equation} \label{eq:pf_opt}
    \begin{aligned}
        \min_{\substack{\{\Psi_k\}_{k = 0}^{K-1} \\ \left\{\tilde{\boldsymbol{\zeta}}^{k+2}\right\}_{k = 0}^{K-2}}}&
        \frac{1}{2}\sum_{k = 0}^{K-1}\left\langle \Psi^k, S_k \Psi^k \right\rangle + \frac{C}{2}\lVert \Psi^0 \rVert^2 \\
    \end{aligned}
\end{equation}
subject to, for $k = 0,\dots,K-2$, the linear time-varying difference system
\begin{equation}\label{eq:pf_linear}
    \Psi^{k+1} = U_{k+1} \Psi^k + P \tilde{\boldsymbol{\zeta}}^{k+2}
\end{equation}
where 
\begin{equation*}
    U_{k+1} = 
    \begin{bmatrix}
        0 & 0 & 0 \\
        I & 0 & 0 \\
        I - W^{k+1} & 0 & I
    \end{bmatrix}
    \otimes I_{p+q},\quad
    P = 
    \begin{bmatrix}
        I \\ 0 \\ 0
    \end{bmatrix}.
\end{equation*}
We want to show that the optimal solution of the above optimization is only zero, so that the optimal value is zero.
If our claim is false, the above control problem is unbounded and then the following restricted optimization has a strictly negative optimal value at a non-zero solution:
\begin{equation*}
    \begin{aligned}
        \min_{\substack{\{\Psi_k\}_{k = 0}^{K-1} \\ \left\{\tilde{\boldsymbol{\zeta}}^{k+2}\right\}_{k = 0}^{K-2}}}&
        \frac{1}{2}\sum_{k = 0}^{K-1}\left\langle \Psi^k, S_k \Psi^k \right\rangle + \frac{C}{2}\lVert \Psi^0 \rVert^2 \\
        \textrm{subject to}\;\;& \eqref{eq:pf_linear}, \qquad k = 0,\dots,K-2, \\
        &\frac{1}{2}\lVert \Psi^0 \rVert^2 + \frac{1}{2}\sum_{k = 0}^{K-2} \left\lVert \tilde{\boldsymbol{\zeta}}^{k+2} \right\rVert^2 \le \frac{1}{2}.
    \end{aligned}
\end{equation*}
Then, there is non-zero solution satisfying the KKT condition with corresponding multipliers $\{\Lambda^k\}, \beta \ge 0$ such that
\begin{equation}\label{eq:pf_kkt_system}
    \begin{aligned}
        S_{k+1} \Psi^{k+1} - U_{k+1}^T \Lambda^{k+1} + \Lambda^k &= 0, \\
        \Psi^{k+1} - U_{k+1} \Psi^k - P \tilde{\boldsymbol{\zeta}}^{k+2} &= 0, \\
        - P^T \Lambda^k + \beta \tilde{\boldsymbol{\zeta}}^{k+2} &= 0,
    \end{aligned}
\end{equation}
for $k = 0,1,\dots,K-2$ with the boundary conditions $\Lambda_{K-1} = 0$ and $(S_0 + (C + \beta) I) \Psi^0 - U_1^T \Lambda^0 = 0$ which is equivalent to
\begin{subequations}
    \begin{align}
        (1-\rho L + C + \beta) \tilde{\boldsymbol{\zeta}}^1 + 2(\rho L I - W^0) \tilde{\boldsymbol{\zeta}}^0 + 2 \rho \boldsymbol{\xi}^0 \notag \\
        - \lambda_2^0 - (I - {W^1}^T) \lambda_3^0 &= 0, \label{eq:pf_zeta_1} \\
        ({W^0}^T W^0 + (C + \beta - \rho L)I) \tilde{\boldsymbol{\zeta}}^0 &= 0, \label{eq:pf_zeta_0} \\
        (C + \beta)\rho \boldsymbol{\xi}^0 - \lambda_3^0 &= 0. \label{eq:pf_xi_0}
    \end{align}
\end{subequations}
Plugging $S_k, U_k, P$ with $\Lambda^k = (\lambda_1^k, \lambda_2^k, \lambda_3^k)$, the equation~\eqref{eq:pf_kkt_system} becomes
\begin{subequations}\label{eq:pf_kkt_long}
    \begin{align}
        (1-\rho L) \tilde{\boldsymbol{\zeta}}^{k+2} + 2(\rho L I - W^k) \tilde{\boldsymbol{\zeta}}^{k+1} + 2 \rho \boldsymbol{\xi}^{k+1} \\
        - \lambda_2^{k+1} - (I - {W^{k+1}}^T) \lambda_3^{k+1} + \lambda_1^k &= 0, \\
        ({W^{k+1}}^T W^{k+1} - \rho L I) \tilde{\boldsymbol{\zeta}}^{k+1} + \lambda_2^k &=0, \label{eq:pf_kkt_zeta_lambda2} \\
        -\lambda_3^{k+1} + \lambda_3^k &= 0, \label{eq:pf_lambda_3} \\
        - \lambda_1^k + \beta \tilde{\boldsymbol{\zeta}}^{k+2} &= 0, \\
        \rho \boldsymbol{\xi}^{k+1} - \rho \boldsymbol{\xi}^k - (I - W^{k+1}) \tilde{\boldsymbol{\zeta}}^{k+1} &= 0,
    \end{align}
\end{subequations} 
for $k = 0,1,\dots,K-2$.
Since $\Lambda_{K-1} = 0$ and \eqref{eq:pf_lambda_3}, we have $\lambda_3^k \equiv 0$, and then, with $W^0 = I$, it follows $\rho \boldsymbol{\xi}^0 = \tilde{\boldsymbol{\zeta}}^0 = 0$ according to \eqref{eq:pf_zeta_0} and \eqref{eq:pf_xi_0}.
Removing $\lambda_2^0$ in \eqref{eq:pf_zeta_1} and \eqref{eq:pf_kkt_zeta_lambda2} at $k = 0$, we obtain
\begin{equation*}
    ({W^{1}}^T W^{1} + (1 - 2\rho L + C + \beta) I) \tilde{\boldsymbol{\zeta}}^{1} = 0,
\end{equation*}
and then $\tilde{\boldsymbol{\zeta}}^{1} = 0$.
Therefore, removing $\Lambda^k$, the equations~\eqref{eq:pf_kkt_long} become
\begin{equation*}
    \begin{aligned}
        ({W^{k+2}}^T W^{k+2} + (1-2\rho L+\beta)I) \tilde{\boldsymbol{\zeta}}^{k+2} \\
        + 2(\rho L I - W^k) \tilde{\boldsymbol{\zeta}}^{k+1} + 2 \rho \boldsymbol{\xi}^{k+1} &= 0, \\
        \rho \boldsymbol{\xi}^{k+2} - \rho \boldsymbol{\xi}^{k+1} - (I - W^{k+2}) \tilde{\boldsymbol{\zeta}}^{k+2} &= 0,
    \end{aligned}
\end{equation*}
for $k = 0,\dots,K-3$, with the boundary conditions $\tilde{\boldsymbol{\zeta}}^1 = \rho \boldsymbol{\xi}^1 = 0$ and 
\begin{equation*}
    \begin{aligned}
        (1-\rho L + \beta) \tilde{\boldsymbol{\zeta}}^K + 2 \rho \boldsymbol{\xi}^{K-1} \\
        + 2(\rho L I - W^{K-2}) \tilde{\boldsymbol{\zeta}}^{K-1} &= 0.
    \end{aligned}
\end{equation*}
Since $1 - 2 \rho L + \beta > 0$ for every $\beta > 0$ from the assumption $\rho \le 1/2L$, we obtain $\tilde{\boldsymbol{\zeta}}^{k+2} = 0$ for $k = 0,\dots,K-2$ so that the system~\eqref{eq:pf_kkt_system} has no non-zero solution.
Therefore, the optimization~\eqref{eq:pf_opt} has zero optimal value, and then
\begin{equation*}
    \sum_{k = 0}^{K-1}\sum_{i = 1}^N R_i^k = \frac{1}{2}\sum_{k = 0}^{K-1}\left\langle \Psi^k, S_k \Psi^k \right\rangle \ge - \frac{C}{2} \lVert \Psi^0 \rVert^2.
\end{equation*}

Finally, We obtain
\begin{equation*}
    \sum_{k = 0}^{K-1} \left(\rho \sum_{i = 1}^N (\Phi_i^{k+1} + T_i^{k+1})\right) 
    \le \frac{1 + 3\rho L}{2}\lVert \tilde{\boldsymbol{\zeta}}^0 \rVert^2 + \frac{C}{2} \lVert \Psi^0 \rVert^2.
\end{equation*}
Since $\Phi_i^k + T_i^k = \varphi_i(\zeta^*) - \varphi_i(\zeta^k)$, we conclude
\begin{equation*}
    \sum_{k = 0}^{K-1} \sum_{i=1}^N \left( \varphi_i(\zeta^*) - \varphi_i(\zeta_i^k) \right) \le \frac{C_0}{2\rho},
\end{equation*}
where $C_0 = (1 + 3\rho L)\lVert \tilde{\boldsymbol{\zeta}}^0 \rVert^2 + C \lVert \Psi^0 \rVert^2 + 2\rho \sum_i \varphi_i(\zeta_i^0)$.

\bibliographystyle{ieeetr}
\bibliography{reference}

\end{document}